\documentclass[12pt]{amsart}
\usepackage{fullpage}
\usepackage{amsfonts,amssymb,amscd,amsmath,enumerate,verbatim,color,xcolor}
\usepackage[latin1]{inputenc}
\usepackage{amscd}
\usepackage{amsthm}
\usepackage{latexsym}
\usepackage{tikz}
\usepackage{graphicx} 

\def\NZQ{\mathbb}               

\def\ZZ{{\NZQ Z}}
\def\RR{{\NZQ R}}

\def\frk{\mathfrak}               

\def\Phi{{\frk N}}
\def\ab{{\mathbf a}}
\def\bb{{\mathbf b}}
\def\cb{{\mathbf c}}
\def\db{{\mathbf d}}
\def\eb{{\mathbf e}}

\def\xb{{\mathbf x}}
\def\yb{{\mathbf y}}

\def\pb{{\mathbf p}}
\def\qb{{\mathbf q}}

\def\opn#1#2{\def#1{\operatorname{#2}}} 
\opn\gr{gr}

\def\Fc{{\mathcal F}}

\def\Pc{{\mathcal P}}

\newtheorem{Theorem}{Theorem}[section]
\newtheorem{Lemma}[Theorem]{Lemma}
\newtheorem{Corollary}[Theorem]{Corollary}
\newtheorem{Proposition}[Theorem]{Proposition}

\theoremstyle{definition}
\newtheorem{Remark}[Theorem]{Remark}

\newtheorem{Example}[Theorem]{Example}

\let\varepsilon\varepsilon
\let\phi=\varphi
\let\kappa=\varkappa

\title{Algebraic aspects of unconditional lattice polytopes}
\author[K. Mori, R. Motomura, H. Ohsugi and A. Tsuchiya]{Kenta Mori, Ryo Motomura, Hidefumi Ohsugi and Akiyoshi Tsuchiya}

\address{Kenta Mori,
	Department of Mathematical Sciences,
	School of Science,
	Kwansei Gakuin University,
	Sanda, Hyogo 669-1330, Japan}
\email{k-mori@kwansei.ac.jp}

\address{Ryo Motomura,
Department of Information Science,
Faculty of Science,
Toho University,
2-2-1 Miyama, Funabashi, Chiba 274-8510, Japan} 
\email{6524012m@st.toho-u.ac.jp}

\address{Hidefumi Ohsugi,
	Department of Mathematical Sciences,
	School of Science,
	Kwansei Gakuin University,
	Sanda, Hyogo 669-1330, Japan} 
\email{ohsugi@kwansei.ac.jp}

\address{Akiyoshi Tsuchiya,
Department of Information Science,
Faculty of Science,
Toho University,
2-2-1 Miyama, Funabashi, Chiba 274-8510, Japan} 
\email{akiyoshi@is.sci.toho-u.ac.jp}
\keywords{anti-blocking polytope, unconditional polytope, toric ideal, toric ring, Kempe equivalence}
\subjclass[2020]{05C15,  13F65, 52B20}

\begin{document}

\begin{abstract}
Unconditional polytopes are convex polytopes that are symmetric with respect to all coordinate hyperplanes and arise naturally from anti-blocking polytopes by reflection. This paper investigates algebraic relations between an anti-blocking lattice polytope and its associated unconditional lattice polytope. We prove that the toric ring of an anti-blocking lattice polytope is normal if and only if the toric ring of the associated unconditional lattice polytope is normal. We also show that the toric ideal of an anti-blocking lattice polytope is generated by quadratic binomials if and only if the same holds for the associated unconditional lattice polytope. As an application, we obtain a graph-theoretic characterization of quadratic generation of symmetric stable set ideals.
\end{abstract}

\maketitle

\section{Introduction}

A \emph{lattice polytope} is a convex polytope all of whose vertices have integer coordinates.
A lattice polytope \(\mathcal{P} \subset \mathbb{R}_{\geq 0}^n\) of dimension \(n\) is called \emph{anti-blocking} if for any \(\mathbf{y}=(y_1,\dots,y_n) \in \mathcal{P}\) and \(\mathbf{x}=(x_1,\dots,x_n) \in \mathbb{R}^n\) with \(0 \leq x_i \leq y_i\) for all \(i\), it holds that \(\mathbf{x} \in \mathcal{P}\).
Anti-blocking polytopes were studied and named by Fulkerson \cite{Fulkerson1,Fulkerson2} in the context of combinatorial optimization, but they are also known as convex corners or down-closed polytopes; see, for example, \cite{downclosed}.
Typical examples include stable set polytopes of graphs and chain polytopes of posets.

For \(\varepsilon = (\varepsilon_1,\dots,\varepsilon_n) \in \{-1,1\}^n\) and \(\mathbf{x}=(x_1,\dots,x_n) \in \mathbb{R}^n\), set
\[
\varepsilon \mathbf{x} := (\varepsilon_1x_1,\dots,\varepsilon_nx_n).
\]
Given an anti-blocking lattice polytope \(\mathcal{P} \subset \mathbb{R}_{\geq 0}^n\) of dimension \(n\), we define
\[
\mathcal{P}^{\pm}
:=
\{
\varepsilon \mathbf{x} \in \mathbb{R}^n :
\varepsilon \in \{-1,1\}^n,\ \mathbf{x} \in \mathcal{P}
\}.
\]
Since \(\mathcal{P}\) is anti-blocking, \(\mathcal{P}^{\pm}\) is a convex lattice polytope.
Moreover, for any \(\varepsilon \in \{-1,1\}^n\) and \(\mathbf{x} \in \mathcal{P}^{\pm}\), one has \(\varepsilon \mathbf{x} \in \mathcal{P}^{\pm}\).
A lattice polytope \(\mathcal{Q} \subset \mathbb{R}^n\) of dimension \(n\) is called \emph{unconditional} if
\(\varepsilon \mathbf{x} \in \mathcal{Q}\) for every \(\varepsilon \in \{-1,1\}^n\) and every \(\mathbf{x} \in \mathcal{Q}\).
Thus \(\mathcal{P}^{\pm}\) is an unconditional lattice polytope naturally associated with \(\mathcal{P}\).
In particular, \(\mathcal{P}^{\pm}\) is symmetric with respect to all coordinate hyperplanes, and hence the origin \(\mathbf{0}\) belongs to the interior of \(\mathcal{P}^{\pm}\).
Unconditional convex bodies form a classical class in convex geometry.
From the viewpoint of lattice polytopes, unconditional polytopes have recently been studied in connection with combinatorics and commutative algebra.
For example, Kohl, Olsen and Sanyal investigated unconditional reflexive polytopes and characterized them in terms of perfect graphs \cite{KOS}.
Moreover, unconditional polytopes are closely related to locally anti-blocking polytopes, whose \(h^*\)-polynomials and \(\gamma\)-positivity have been studied \cite{OhsugiTsuchiya2021locally}.
These developments motivate the study of algebraic properties of unconditional lattice polytopes arising from anti-blocking lattice polytopes.

The main purpose of this paper is to investigate algebraic relations between an anti-blocking lattice polytope and its associated unconditional lattice polytope. More precisely, we study to what extent algebraic properties of the toric rings and toric ideals of these two polytopes are shared by each other. We show that this is indeed the case for normality and quadraticity.

Let \(K[{\bf t}^{\pm1}, s] = K[t_1^{\pm1}, \ldots, t_n^{\pm1}, s]\) be the Laurent polynomial ring in \(n+1\) variables over a field \(K\).
If \(\mathcal{P} \cap \mathbb{Z}^n = \{\mathbf{a}_1,\dots,\mathbf{a}_m\}\), then the \emph{toric ring} of \(\mathcal{P}\) is the subalgebra \(K[\mathcal{P}]\) of \(K[{\bf t}^{\pm1}, s]\) generated by
$\{{\bf t}^{\mathbf{a}_1}s,\dots,{\bf t}^{\mathbf{a}_m}s\}$.
Let \(K[\mathbf{x}] = K[x_1,\dots,x_m]\) be the polynomial ring in \(m\) variables over \(K\).
The \emph{toric ideal} \(I_{\mathcal{P}}\) of \(\mathcal{P}\) is the kernel of the surjective homomorphism
\[
\pi : K[\mathbf{x}] \rightarrow K[\mathcal{P}]
\]
defined by \(\pi(x_i) = {\bf t}^{\mathbf{a}_i}s\) for \(1 \leq i \leq m\).
See, e.g., \cite{HHObook} for details on toric rings and toric ideals.

Our first main result concerns normality of toric rings.

\begin{Theorem}\label{thm:normality}
Let \(\mathcal{P} \subset \mathbb{R}^n\) be an anti-blocking lattice polytope.
Then \(K[\mathcal{P}]\) is normal if and only if \(K[\mathcal{P}^{\pm}]\) is normal.
\end{Theorem}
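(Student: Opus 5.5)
We work with the standard characterization: for a lattice polytope $\mathcal{Q}$, the ring $K[\mathcal{Q}]$ is normal if and only if $\mathcal{Q}$ has the \emph{integer decomposition property} relative to the lattice generated by its lattice points. For $\mathcal{P}$ and $\mathcal{P}^{\pm}$ this lattice is $\mathbb{Z}^n$. Indeed, since $\mathcal{P}$ is anti-blocking of dimension $n$, it contains $\mathbf{0}$ and every unit vector $\mathbf{e}_i$ (some lattice point has positive $i$-th coordinate, and down-closedness lets us shrink to $\mathbf{e}_i$). So the lattice points of $\mathcal{P}\times\{1\}$ generate $\mathbb{Z}^{n+1}$. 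Hence normality of $K[\mathcal{P}]$ means: for every $k\ge 1$ and every $\mathbf{z}\in k\mathcal{P}\cap\mathbb{Z}^n$ there are $\mathbf{a}_1,\dots,\mathbf{a}_k\in\mathcal{P}\cap\mathbb{Z}^n$ with $\mathbf{z}=\sum \mathbf{a}_j$. The same statement holds for $\mathcal{P}^{\pm}$. The plan is to transfer such decompositions in both directions, using two elementary facts:
\begin{itemize}
\item $k\mathcal{P}^{\pm}=(k\mathcal{P})^{\pm}$;
\item for $\mathbf{x}\in\mathbb{R}^n$, we have $\mathbf{x}\in\mathcal{P}^{\pm}$ if and only if $|\mathbf{x}|:=(|x_1|,\dots,|x_n|)\in\mathcal{P}$.
\end{itemize}
The second fact follows from unconditional symmetry. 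For the first, note that $\varepsilon$ acts linearly, so $k(\varepsilon\mathcal{P})=\varepsilon(k\mathcal{P})$.

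\emph{($\Leftarrow$) Suppose $\mathcal{P}^{\pm}$ has IDP.} Take $\mathbf{z}\in k\mathcal{P}\cap\mathbb{Z}^n$, so $\mathbf{z}\ge 0$ and $\mathbf{z}\in k\mathcal{P}^{\pm}$. Write $\mathbf{z}=\sum_j \mathbf{b}_j$ with $\mathbf{b}_j\in\mathcal{P}^{\pm}\cap\mathbb{Z}^n$. The $\mathbf{b}_j$ may have negative entries, so we replace them by nonnegative vectors in a coordinatewise way. Fix a coordinate $i$. Since $\sum_j b_{j,i}=z_i\ge 0$, we can choose integers $c_{j,i}$ with
\[
0\le c_{j,i}\le |b_{j,i}|, \qquad \sum_j c_{j,i}=z_i.
\]
One way to do this is to start from $c_{j,i}=\max(b_{j,i},0)$, whose sum is $\sum_j b_{j,i}^+ \ge z_i$, and then decrease entries one unit at a time until the sum equals $z_i$. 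Set $\mathbf{c}_j=(c_{j,1},\dots,c_{j,n})$. Then $0\le\mathbf{c}_j\le|\mathbf{b}_j|\in\mathcal{P}$, so $\mathbf{c}_j\in\mathcal{P}$ by the anti-blocking property. Also $\sum_j\mathbf{c}_j=\mathbf{z}$, which is the required decomposition in $\mathcal{P}$.

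\emph{($\Rightarrow$) Suppose $\mathcal{P}$ has IDP.} Take $\mathbf{z}\in k\mathcal{P}^{\pm}\cap\mathbb{Z}^n$. Then $|\mathbf{z}|\in k\mathcal{P}\cap\mathbb{Z}^n$, so $|\mathbf{z}|=\sum_j\mathbf{a}_j$ with $\mathbf{a}_j\in\mathcal{P}\cap\mathbb{Z}^n$ and $\mathbf{a}_j\ge 0$. Let $\varepsilon$ be a sign vector with $\varepsilon|\mathbf{z}|=\mathbf{z}$. Then $\mathbf{z}=\sum_j\varepsilon\mathbf{a}_j$, and each $\varepsilon\mathbf{a}_j$ lies in $\mathcal{P}^{\pm}\cap\mathbb{Z}^n$. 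This direction is immediate.

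\emph{Main obstacle.} The only delicate step is the ($\Leftarrow$) direction: a decomposition in $\mathcal{P}^{\pm}$ can involve cancellation between positive and negative entries. The coordinatewise truncation above removes this cancellation. It works because anti-blocking lets us lower each coordinate independently while staying in $\mathcal{P}$, and because $|\mathbf{b}_j|$ lies in $\mathcal{P}$.

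A secondary point is the equivalence between normality and IDP. This requires the lattice generated by the lattice points to be the full lattice $\mathbb{Z}^{n+1}$, which we secured above using $\mathbf{0},\mathbf{e}_i\in\mathcal{P}$.
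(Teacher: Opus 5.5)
Your proof is correct and follows essentially the same route as the paper. Normality is reduced to IDP via spanning. The direction from $\mathcal{P}$ to $\mathcal{P}^{\pm}$ comes from decomposing inside the orthant containing the point, which is the paper's Lemma~\ref{lem:union-idp}. The converse comes from eliminating sign cancellations in a decomposition taken in $\mathcal{P}^{\pm}$. There are only two differences. The paper proves the more general Theorem~\ref{thm:normality-local} for locally anti-blocking polytopes. It also removes cancellations by repeated exchanges $\yb_i-\eb_k$, $\yb_j+\eb_k$ rather than by your one-shot truncation $\mathbf{0}\le \cb_j\le|\bb_j|$, a device the paper itself uses later in Lemma~\ref{lem:canonical-positive-projection}.
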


Our second main result concerns quadratic generation of toric ideals.

\begin{Theorem}\label{thm:quadiff}
Let \(\mathcal{P} \subset \mathbb{R}^n\) be an anti-blocking lattice polytope.
Then \(I_{\mathcal{P}}\) is generated by quadratic binomials if and only if \(I_{\mathcal{P}^{\pm}}\) is generated by quadratic binomials.
\end{Theorem}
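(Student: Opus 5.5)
The plan is to prove the two directions separately. For the direction ``$I_{\mathcal{P}^{\pm}}$ quadratic $\Rightarrow$ $I_{\mathcal{P}}$ quadratic'' I will use a combinatorial pure subring argument. For the converse I will show that every binomial of $I_{\mathcal{P}^{\pm}}$ can be reduced, modulo quadrics, to binomials of $I_{\mathcal{P}}$ and to simple sign-moves.

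\emph{Easy direction.} The lattice points of $\mathcal{P}$ are exactly the lattice points of $\mathcal{P}^{\pm}$ lying in $\mathbb{R}_{\ge 0}^n$. Moreover, $\mathbb{R}_{\ge0}^n \cap \mathcal{P}^{\pm}$ is a face-like region cut out by the sign condition. For any relation $\sum \mathbf{a}_i = \sum \mathbf{b}_j$ with all $\mathbf{a}_i \in \mathcal{P}^{\pm}$ and all $\mathbf{b}_j$ nonnegative, the $\mathbf{a}_i$ need not be nonnegative. So I will instead use the retraction $\mathbf{x}\mapsto |\mathbf{x}|$ (coordinatewise absolute value). This map sends $\mathcal{P}^{\pm}\cap\mathbb{Z}^n$ onto $\mathcal{P}\cap\mathbb{Z}^n$, but it is not additive, so it cannot be used directly.

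A cleaner route is to note that $K[\mathcal{P}]$ is a combinatorial pure subring of $K[\mathcal{P}^{\pm}]$. Consider the degree-$d$ monomials $\prod \mathbf{t}^{\mathbf{a}_i}s$ whose exponent $\sum\mathbf{a}_i$ lies in $d\mathcal{P}$ with nonnegative coordinates. If some $\mathbf{a}_i$ had a negative coordinate $-c<0$, then some other $\mathbf{a}_j$ would have to be positive in that coordinate. In that case the fibre is not contained in the nonnegative orthant, so purity fails in this naive form.

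I will therefore argue directly instead. Let $f=u-v\in I_{\mathcal{P}}$ be a binomial, and write it as a combination of quadrics of $I_{\mathcal{P}^{\pm}}$, that is, as a path of quadratic moves $u=w_0\to w_1\to\cdots\to w_k=v$ in the fibre. Apply to every lattice point the map $\mathbf{x}\mapsto|\mathbf{x}|$. Each quadratic move $\mathbf{p}+\mathbf{q}=\mathbf{p}'+\mathbf{q}'$ becomes a move $|\mathbf{p}|,|\mathbf{q}|\to|\mathbf{p}'|,|\mathbf{q}'|$ between elements of $\mathcal{P}$. These need not have equal sums, so I replace them with a sign-adjusted version. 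The key lemma is: if $\mathbf{p}+\mathbf{q}=\mathbf{p}'+\mathbf{q}'$ with all four points in $\mathcal{P}^\pm$, then in each coordinate, truncating the cancelling parts gives points $\tilde{\mathbf{p}}\le|\mathbf{p}|$ and so on, which lie in $\mathcal{P}$ by the anti-blocking property and satisfy $\tilde{\mathbf{p}}+\tilde{\mathbf{q}}=\tilde{\mathbf{p}}'+\tilde{\mathbf{q}}'$. Making this truncation coherent along a whole path is delicate. I therefore expect the more robust proof to go through the multigrading by $\mathbb{Z}^n\times\mathbb{Z}$ together with the following reduction: a monomial in a fibre whose degree lies in the nonnegative orthant can be moved, by quadrics of $I_{\mathcal{P}}$ together with the quadratic ``cancellation'' moves $(\mathbf{p},\mathbf{q})\to(\mathbf{p}-c\mathbf{e}_i,\mathbf{q}+c\mathbf{e}_i)$, to a monomial using only nonnegative points.

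\emph{Hard direction} ($I_{\mathcal{P}}$ quadratic $\Rightarrow$ $I_{\mathcal{P}^{\pm}}$ quadratic). Take a binomial $u-v$ of $I_{\mathcal{P}^\pm}$ of degree $d\ge3$. First, using quadratic moves of the form
\[
(\mathbf{p},\mathbf{q})\mapsto(\mathbf{p}+c\mathbf{e}_i,\ \mathbf{q}-c\mathbf{e}_i),
\]
where $\mathbf{p}$ and $\mathbf{q}$ have opposite signs in coordinate $i$, I cancel mixed signs within each coordinate. These moves are valid because the anti-blocking property keeps both new points inside $\mathcal{P}^\pm$. After this step, $u$ and $v$ are sign-coherent: every coordinate $i$ has a single sign $\sigma_i$ across all factors, and this sign is determined by the common multidegree, except in coordinates where the total is $0$, where both sides become $0$. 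Flipping by $\sigma$ then identifies the two monomials with monomials of $K[\mathcal{P}]$ in the same fibre, and since $I_{\mathcal{P}}$ is quadratic they are connected by quadratic moves, which flip back to moves in $\mathcal{P}^\pm$.

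The main obstacle is showing that the cancellation moves terminate with the zero-total coordinates cleared. Each move strictly decreases $\sum_i\sum_j|a_{j,i}|$, which handles termination. The same monovariant, applied to the truncation argument, should also settle the easy direction: once a sign-coherent representative is reached, the whole path can be flipped into $\mathcal{P}$.
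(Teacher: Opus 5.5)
Your argument for ``$I_{\mathcal{P}}$ quadratic $\Rightarrow$ $I_{\mathcal{P}^{\pm}}$ quadratic'' is correct and is essentially the paper's Proposition~\ref{prop:GB}. You use single-coordinate cancellation quadrics $x_{\mathbf p}x_{\mathbf q}-x_{\mathbf p+c\mathbf e_i}x_{\mathbf q-c\mathbf e_i}$, and you should specify $p_i<0<q_i$ and $0<c\le\min(-p_i,q_i)$. The paper instead uses $x_{\mathbf a}x_{\mathbf b}-x_{\varepsilon\mathbf p}x_{\varepsilon\mathbf q}$ for separable pairs. In both cases $\sum_{i,j}|a_{j,i}|$ drops and the resulting sign-coherent binomial flips into $I_{\mathcal{P}}$.

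The genuine gap is the converse, which you call easy but which is where all the work lies. Having rightly discarded the face argument and the non-additive map $\mathbf x\mapsto|\mathbf x|$, you propose to replace each monomial $w_j$ on a path $u=w_0\to\cdots\to w_k=v$ of $I_{\mathcal{P}^{\pm}}$-quadrics by \emph{some} nonnegative monomial $\tilde w_j$ in its fibre. This only gives $\tilde w_j-\tilde w_{j+1}\in I_{\mathcal{P}}$ in degree $r$. Showing that such a binomial lies in the ideal generated by the quadrics of $I_{\mathcal{P}}$ is exactly the original problem. Moreover, the cancellation moves themselves involve variables outside $K[\mathbf x]$.

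Your pair-local ``key lemma'' cannot repair this, because the truncation a move requires depends on its cofactor. Take $\mathcal{P}=[0,2]\subset\mathbb{R}$ and the move $x_2x_2\cdot x_{-1}x_{-1}\to x_2x_2\cdot x_{-2}x_0$. The cofactor alone has total $4$ while the fibre has total $2$, so no truncation of the moved pair alone lands in the right fibre. Termination, which your monovariant controls, was never the difficulty; coherence is. Likewise, the closing claim that ``the whole path can be flipped into $\mathcal{P}$'' is unsupported: $u$ and $v$ are already nonnegative, and it is the intermediate $w_j$ that are not.

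The missing idea is a canonical choice of nonnegative representative that changes only locally under local changes. The paper orders the factors as a list $\Lambda$. In each coordinate $k$ it fills the quota $\omega_k(\Lambda)\ge0$ greedily from left to right, using only the positive contributions. This gives $\mathbf 0\le\mathbf c_j\le\mathbf a_j$ (so $\mathbf c_j\in A$ by anti-blocking) and $\sum_j\mathbf c_j=\omega(\Lambda)$, and it is the identity on lists of nonnegative points.

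Its key property is prefix-stability: the allocation to the first $r-2$ entries depends only on those entries and on $\omega(\Lambda)$. The proof then proceeds in three steps:
\begin{enumerate}
\item[(a)] Write each move of the $\mathcal{P}^{\pm}$-path with its two moved factors last; the two projected monomials then differ by a monomial multiple of a quadric of $I_{\mathcal{P}}$ (Lemma~\ref{lem:two-slot-compatibility}).
\item[(b)] Reorder between consecutive moves by adjacent transpositions, each of which changes only two projected points (Lemma~\ref{lem:adjacent-swap}, Corollary~\ref{cor:ordering-independence}).
\item[(c)] Concatenate these steps to obtain a chain of $I_{\mathcal{P}}$-quadrics from $u$ to $v$.
\end{enumerate}
Without some such coherent representative, your outline does not establish this direction.
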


As an application, we obtain a graph-theoretic characterization of quadratic generation of symmetric stable set ideals.

The paper is organized as follows.
In Section~\ref{sec:normality}, we introduce locally anti-blocking lattice polytopes and prove a more general normality result in this setting, from which Theorem~\ref{thm:normality} follows immediately.
Section~\ref{sec:quadraticity} is devoted to quadratic generation.
In Subsection~\ref{subsec:ascending}, we prove the implication from \(I_{\Pc}\) to \(I_{\Pc^\pm}\).
In Subsection~\ref{subsec:proofquadraticity}, we prove the converse implication from \(I_{\Pc^\pm}\) to \(I_{\Pc}\).
Finally, in Subsection~\ref{subsec:stable}, we apply Theorem~\ref{thm:quadiff} to symmetric stable set polytopes and obtain a characterization in terms of Kempe equivalence.

\subsection*{Acknowledgment}
This work was supported by 
JSPS KAKENHI 22K13890,\\ 24K00534, 26K00618 and 26K16970.

\section{Normality of the toric rings of locally anti-blocking polytopes}
\label{sec:normality}

In this section, we prove a more general normality result for locally anti-blocking lattice polytopes, from which Theorem~\ref{thm:normality} follows immediately.

We begin by recalling the integer decomposition property.
A lattice polytope \(\mathcal{P}\subset \mathbb{R}^n\) is said to have the \emph{integer decomposition property} (\emph{IDP}, for short) if for any integer \(t\ge 1\) and any lattice point \(\mathbf{x}\in t\mathcal{P}\cap \mathbb{Z}^n\), there exist lattice points
\(\mathbf{x}_1,\dots,\mathbf{x}_t\in \mathcal{P}\cap \mathbb{Z}^n\) such that
\[
\mathbf{x}=\mathbf{x}_1+\cdots+\mathbf{x}_t.
\]
Here $t \Pc := \{t \alpha  :  \alpha \in \Pc\}$.
A lattice polytope which has the integer decomposition property is called \textit{IDP}.
We also recall the notion of spanning.
A lattice polytope \(\mathcal{P}\subset \mathbb{R}^n\) is called \emph{spanning} if the lattice points in \(\mathcal{P}\) affinely generate \(\mathbb{Z}^n\).
In general, normality and IDP are not equivalent.
However, for any spanning lattice polytope $\Pc$, the toric ring \(K[\mathcal{P}]\) is normal if and only if \(\mathcal{P}\) is IDP.
Since every full-dimensional anti-blocking lattice polytope is spanning, this equivalence applies to the polytopes considered in this paper.

We now recall the notion of locally anti-blocking lattice polytopes.
For \(\varepsilon=(\varepsilon_1,\dots,\varepsilon_n)\in \{-1,1\}^n\), let
\[
\mathbb{R}^n_{\varepsilon}
=
\{
(x_1,\dots,x_n)\in \mathbb{R}^n : x_i\varepsilon_i\ge 0 \text{ for all } 1\le i\le n
\}
\]
denote the closed orthant corresponding to \(\varepsilon\).
A lattice polytope \(\mathcal{P}\subset \mathbb{R}^n\) of dimension \(n\) is called \emph{locally anti-blocking} if, for each \(\varepsilon\in \{-1,1\}^n\), there exists an anti-blocking lattice polytope \(\mathcal{P}_\varepsilon\subset \mathbb{R}_{\ge 0}^n\) of dimension \(n\), which is called the \textit{anti-blocking piece} of $\Pc$ corresponding to $\varepsilon$, such that
\[
\mathcal{P}\cap \mathbb{R}^n_\varepsilon
=
\mathcal{P}_\varepsilon^{\pm}\cap \mathbb{R}^n_\varepsilon.
\]
In particular, every unconditional lattice polytope is locally anti-blocking.
This class provides a natural common framework containing both anti-blocking and unconditional lattice polytopes.
For a locally anti-blocking lattice polytope \(\mathcal{P}\), each anti-blocking piece \(\mathcal{P}_\varepsilon\) is full-dimensional, hence spanning.
Therefore, in our setting, the normality statement is reduced to the corresponding statement for IDP.

We will use the following elementary lemma.

\begin{Lemma}\label{lem:union-idp}
Let \(\mathcal{P},\mathcal{P}_1,\dots,\mathcal{P}_r\subset \mathbb{R}^n\) be lattice polytopes such that
\[
\mathcal{P}=\mathcal{P}_1\cup \cdots \cup \mathcal{P}_r.
\]
If each \(\mathcal{P}_i\) is IDP, then \(\mathcal{P}\) is also IDP.
\end{Lemma}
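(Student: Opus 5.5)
The plan is to use the fact that dilation commutes with the union decomposition. Then any lattice point of \(t\mathcal{P}\) already lies in a dilate of a single piece, and the decomposition can be taken inside that piece.

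Fix an integer \(t\ge 1\) and a lattice point \(\mathbf{x}\in t\mathcal{P}\cap\mathbb{Z}^n\).

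First, I show that
\[
t\mathcal{P}=t\mathcal{P}_1\cup\cdots\cup t\mathcal{P}_r .
\]
By the definition \(t\mathcal{P}=\{t\alpha:\alpha\in\mathcal{P}\}\), we may write \(\mathbf{x}=t\alpha\) with \(\alpha\in\mathcal{P}\). Since \(\mathcal{P}=\mathcal{P}_1\cup\cdots\cup\mathcal{P}_r\), the point \(\alpha\) lies in some \(\mathcal{P}_i\). Hence \(\mathbf{x}\in t\mathcal{P}_i\cap\mathbb{Z}^n\).

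Second, I apply the IDP hypothesis to \(\mathcal{P}_i\). It gives lattice points \(\mathbf{x}_1,\dots,\mathbf{x}_t\in\mathcal{P}_i\cap\mathbb{Z}^n\) with \(\mathbf{x}=\mathbf{x}_1+\cdots+\mathbf{x}_t\). Since \(\mathcal{P}_i\subset\mathcal{P}\), each \(\mathbf{x}_j\) also lies in \(\mathcal{P}\cap\mathbb{Z}^n\). This is exactly the decomposition required by the definition of IDP for \(\mathcal{P}\).

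There is no real obstacle here. The only point needing care is that the convexity of \(\mathcal{P}\) is not used at all: the argument rests only on the pointwise identity \(t(\bigcup_i\mathcal{P}_i)=\bigcup_i t\mathcal{P}_i\) and on the inclusions \(\mathcal{P}_i\subset\mathcal{P}\).
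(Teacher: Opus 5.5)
Your proof is correct and complete. If $\mathbf{x}=t\alpha\in t\mathcal{P}\cap\mathbb{Z}^n$, then $\alpha$ lies in some $\mathcal{P}_i$, and the IDP decomposition of $\mathbf{x}$ inside $\mathcal{P}_i$ is also a decomposition into lattice points of $\mathcal{P}$. The paper states this lemma as elementary and gives no proof, and your argument is the obvious one it relies on.
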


We are now in a position to prove the main result of this section.

\begin{Theorem}\label{thm:normality-local}
Let \(\mathcal{P} \subset \mathbb{R}^n\) be a locally anti-blocking lattice polytope of dimension \(n\).
Then \(K[\mathcal{P}]\) is normal if and only if the toric ring \(K[\mathcal{P}_\varepsilon]\)
of the anti-blocking piece \(\mathcal{P}_\varepsilon\)
is normal for each \(\varepsilon \in \{-1,1\}^n\).
\end{Theorem}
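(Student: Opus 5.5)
Since $\mathcal{P}$ and every anti-blocking piece $\mathcal{P}_\varepsilon$ are full-dimensional, we reduce everything to IDP, as explained above. For $\mathcal{P}$ itself we need spanning: it contains the full-dimensional lattice polytope $\mathcal{P}_\varepsilon^{\pm}\cap\mathbb{R}^n_\varepsilon$, a reflection of $\mathcal{P}_\varepsilon$, whose lattice points contain $\mathbf{0}$ and the reflected unit vectors $\varepsilon_i\mathbf{e}_i$. Indeed $\mathcal{P}_\varepsilon$ is anti-blocking and full-dimensional, so it contains some point with $x_i>0$, and hence the unit vector $\mathbf{e}_i$, since all its coordinates are integers. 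So it suffices to prove that $\mathcal{P}$ is IDP if and only if each $\mathcal{P}_\varepsilon$ is IDP. Write $\mathcal{Q}_\varepsilon:=\mathcal{P}\cap\mathbb{R}^n_\varepsilon=\varepsilon\mathcal{P}_\varepsilon$. This is a lattice polytope, unimodularly equivalent to $\mathcal{P}_\varepsilon$ via the coordinate reflection $\mathbf{x}\mapsto\varepsilon\mathbf{x}$, so $\mathcal{Q}_\varepsilon$ is IDP if and only if $\mathcal{P}_\varepsilon$ is.

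For the direction ``pieces IDP $\Rightarrow$ $\mathcal{P}$ IDP'', observe that $\mathcal{P}=\bigcup_{\varepsilon}\mathcal{Q}_\varepsilon$, since the closed orthants cover $\mathbb{R}^n$. Lemma~\ref{lem:union-idp} then gives the claim directly.

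For the converse, assume $\mathcal{P}$ is IDP and fix $\varepsilon$. Take $\mathbf{x}\in t\mathcal{Q}_\varepsilon\cap\mathbb{Z}^n$. Then $\mathbf{x}\in t\mathcal{P}$, so $\mathbf{x}=\mathbf{x}_1+\cdots+\mathbf{x}_t$ with $\mathbf{x}_j\in\mathcal{P}\cap\mathbb{Z}^n$. The $\mathbf{x}_j$ need not lie in the orthant $\mathbb{R}^n_\varepsilon$, and fixing this is the main step. Let $\varepsilon^{(j)}$ be a sign vector with $\mathbf{x}_j\in\mathbb{R}^n_{\varepsilon^{(j)}}$. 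Then $\mathbf{x}_j\in\varepsilon^{(j)}\mathcal{P}_{\varepsilon^{(j)}}$, and we use the anti-blocking property of that piece: we may shrink coordinates of $\mathbf{x}_j$ toward $0$ while staying in $\mathcal{Q}_{\varepsilon^{(j)}}\subset\mathcal{P}$. We do this coordinatewise. For each $i$, the values $(\mathbf{x}_j)_i$ sum to $x_i$, which has sign $\varepsilon_i$ (or is $0$). Cancel the wrong-signed entries against right-signed ones: repeatedly choose $j,k$ with $\varepsilon_i(\mathbf{x}_j)_i<0<\varepsilon_i(\mathbf{x}_k)_i$, and move both entries toward $0$ by one unit. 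Each move keeps the sum fixed and keeps integrality. Each new vector lies in the same orthant as before, with a coordinate closer to $0$, so it stays in the corresponding $\mathcal{Q}$, hence in $\mathcal{P}$, by anti-blocking. The process terminates because $\sum_j|(\mathbf{x}_j)_i|$ strictly decreases. Once no wrong-signed entries remain in any coordinate, every $\mathbf{x}_j$ lies in $\mathbb{R}^n_\varepsilon\cap\mathcal{P}=\mathcal{Q}_\varepsilon$.

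The point to check carefully is that the anti-blocking property applies orthant by orthant. A vector $\mathbf{y}\in\mathcal{Q}_\delta$ equals $\delta\mathbf{z}$ with $\mathbf{z}\in\mathcal{P}_\delta$. Decreasing $|y_i|$ corresponds to decreasing $z_i$, so the result stays in $\delta\mathcal{P}_\delta$. A coordinate reaching $0$ keeps the vector in that orthant, so the argument goes through. This shows $\mathcal{Q}_\varepsilon$ is IDP, hence $\mathcal{P}_\varepsilon$ is IDP and $K[\mathcal{P}_\varepsilon]$ is normal, completing the proof.
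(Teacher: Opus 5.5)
Your proof is correct and follows essentially the same route as the paper: Lemma~\ref{lem:union-idp} applied to the orthant pieces gives one direction. For the converse you decompose in $\mathcal{P}$ and cancel opposite-signed coordinates by unit moves, which local anti-blocking permits, until every summand lies in the target orthant; the differences from the paper are cosmetic. You argue directly for an arbitrary $\varepsilon$ rather than by contradiction with $\varepsilon=(1,\ldots,1)$, and you spell out two points the paper leaves implicit: that $\mathcal{P}$ is spanning, and the orthant-by-orthant justification of the unit moves.
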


\begin{proof}
Assume first that each \(\mathcal{P}_\varepsilon \) is IDP.
Since each \(\mathcal{P}\cap \mathbb{R}^n_\varepsilon=
\mathcal{P}_\varepsilon^{\pm}\cap \mathbb{R}^n_\varepsilon\) is unimodularly equivalent to \(\mathcal{P}_\varepsilon\), 
\[
\mathcal{P}
=
\bigcup_{\varepsilon\in\{-1,1\}^n}
\left(\mathcal{P}\cap \mathbb{R}^n_\varepsilon\right)
\]
is IDP by Lemma~\ref{lem:union-idp}.

Suppose that some \(\Pc_\varepsilon \subset  \RR_{\ge 0}^n\) is not IDP.
We may assume that \(\varepsilon =(1,\ldots,1)\), and hence
$\Pc_\varepsilon \subset \Pc$.
Then there exists \(\xb =(x_1,\dots,x_n) \in t\Pc_\varepsilon \cap \ZZ^n\) with an integer \(t \geq 2\)
such that there exist no \(\xb_1, \dots ,\xb_t\in \Pc_\varepsilon \cap  \ZZ^n\) with
\[
\xb=\xb_1+\dots +\xb_t.
\]

Suppose that \(\Pc\) is IDP.
Then since \(\xb \in t\Pc_\varepsilon \cap \ZZ^n \subset t\Pc \cap \ZZ^n\), there exist \(t\) lattice points \(\yb_1,\ldots,\yb_t\) in \(\Pc \cap \ZZ^n\) with
\[
\xb = \yb_1+\cdots+\yb_t.
\]
For each \(\yb_i\), we write \(\yb_i=(y_{i1},\ldots,y_{in})\).

If \(y_{ik}>0\) and \(y_{jk}<0\) for some \(1 \leq i , j \leq t\) and some \(1 \leq k \leq n\), then \(\yb_i-\eb_k\) and \(\yb_j+\eb_k\),
where $\eb_k$ is the $k$-th unit coordinate vector, belong to \(\Pc\) since \(\Pc\) is locally anti-blocking.
Then we can replace \(\yb_i\) and \(\yb_j\) with \(\yb_i-\eb_k\) and \(\yb_j+\eb_k\), respectively, since
\[
\yb_i + \yb_j = (\yb_i-\eb_k) +(\yb_j+\eb_k).
\]
By performing this operation repeatedly, we may assume that for any \(1 \leq i < j \leq t\) and any \(1 \leq k \leq n\),
\[
y_{ik} y_{jk} \geq 0.
\]

Since \(\xb \in t \Pc_\varepsilon \subset \RR_{\ge 0}^n\),
\[
0 \le x_k = y_{1k} + \cdots + y_{tk}.
\]
Hence each \(y_{ik}\) is nonnegative.
Therefore each \(\yb_i\) belongs to
\[
\Pc\cap \RR_{\ge0}^n
=
\Pc_\varepsilon^{\pm}\cap \RR_{\ge0}^n
=
\Pc_\varepsilon,
\]
and thus
\[
\yb_1,\ldots,\yb_t \in \Pc_\varepsilon \cap \ZZ^n,
\]
a contradiction.
Thus \(\Pc\) is not IDP.
\end{proof}

Theorem~\ref{thm:normality} follows by applying Theorem~\ref{thm:normality-local} to the unconditional polytope \(\mathcal{P}^{\pm}\), whose anti-blocking pieces are all unimodularly equivalent to \(\mathcal{P}\).

\section{Quadratic generation}\label{sec:quadraticity}
In this section, we prove Theorem~\ref{thm:quadiff}.

\subsection{Ascending quadratic generation from \(I_{\Pc}\) to \(I_{\Pc^\pm}\)}
\label{subsec:ascending}

Two vectors \(\ab=(a_1,\dots,a_n), \bb=(b_1,\dots,b_n)\in \RR^n\) are called \emph{separable} if
\[
a_i b_i <0
\]
for some \(1 \le i \le n\). If $\Pc \subset \RR^n$ is an anti-blocking lattice polytope and $\ab, \bb \in \Pc^{\pm} \cap \ZZ^n$ are separable, then there exist $\varepsilon \in \{-1,1\}^n$  and $\pb, \qb \in \Pc \cap \ZZ^n$ such  that $\varepsilon(\ab+\bb)=\pb+\qb$.

The following proposition may be viewed as a generator version of \cite[Theorem 7.2]{KOS}.

\begin{Proposition}\label{prop:GB}
Let $\Pc \subset \RR^n$ be an anti-blocking lattice polytope.
Assume that $I_{\Pc}$ is generated by 
the set of binomials $\Fc$.
Let $\Fc^{\pm}$ be the set consisting of the binomials 
\[
\prod_{i=1}^rx_{\varepsilon \ab_i}-\prod_{i=1}^r x_{\varepsilon \bb_i}
\]
for any $\prod_{i=1}^rx_{\ab_i}-\prod_{i=1}^r x_{\bb_i} \in \Fc$ and $\varepsilon \in \{-1,1\}^n$, and
\[
x_{\ab}x_{\bb}-x_{\varepsilon \pb}x_{\varepsilon \qb}
\]
for any separable $\ab, \bb \in \Pc^{\pm} \cap \ZZ^n$ and $\varepsilon \in \{-1,1\}^n$ such that $\varepsilon(\ab+\bb)=\pb+\qb$ and $\pb, \qb \in \Pc \cap \ZZ^n$.
Then $I_{{\Pc}^{\pm}}$ is generated by $\Fc^{\pm}$.
\end{Proposition}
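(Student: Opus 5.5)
First, every element of $\Fc^{\pm}$ lies in $I_{\Pc^{\pm}}$. The map $\xb\mapsto\varepsilon\xb$ is linear and preserves $\Pc^{\pm}\cap\ZZ^n$, so $\sum\ab_i=\sum\bb_i$ implies $\sum\varepsilon\ab_i=\sum\varepsilon\bb_i$. For the second type of binomial, $\varepsilon\pb+\varepsilon\qb=\varepsilon\varepsilon(\ab+\bb)=\ab+\bb$, since $\varepsilon$ is an involution. Let $J$ denote the ideal generated by $\Fc^{\pm}$, so $J\subseteq I_{\Pc^{\pm}}$. Because $I_{\Pc^{\pm}}$ is generated by binomials of the form $\prod_{i=1}^r x_{\ab_i}-\prod_{i=1}^r x_{\bb_i}$ with $\sum\ab_i=\sum\bb_i$, it suffices to show that any two monomials $u=\prod x_{\ab_i}$ and $v=\prod x_{\bb_i}$ of degree $r$ with the same weight $\sum\ab_i=\sum\bb_i=:\cb$ satisfy $u\equiv v \pmod J$.

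The key reduction is to "sign-normal form". Call $u$ \emph{non-separable} if no two of its factors $\ab_i,\ab_j$ are separable, i.e. for each coordinate $k$ the entries $a_{ik}$ all have weakly the same sign. I will show that every monomial is congruent modulo $J$ to a non-separable one. While $u$ has a separable pair $\ab_i,\ab_j$, pick $\varepsilon$ with $\varepsilon_k=\operatorname{sign}$ of the relevant entries, and use the relation $x_{\ab_i}x_{\ab_j}-x_{\varepsilon\pb}x_{\varepsilon\qb}\in\Fc^{\pm}$ to replace the pair.

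To make this terminate, use the quantity $\sum_{i}\|\ab_i\|_1$. Since $\pb+\qb=\varepsilon(\ab_i+\ab_j)$ with $\pb,\qb\ge 0$, we get
\[
\|\pb\|_1+\|\qb\|_1=\|\ab_i+\ab_j\|_1<\|\ab_i\|_1+\|\ab_j\|_1,
\]
where the inequality is strict because of cancellation in a separating coordinate. So the quantity strictly decreases and the process stops. The real care goes into choosing $\varepsilon$ so that $\varepsilon(\ab_i+\ab_j)\ge 0$: take $\varepsilon_k$ to be the sign of the $k$-th coordinate of the sum, with $+1$ when it is zero. The existence of $\pb,\qb$ is then the fact stated just before the proposition, and the corresponding binomial is in $\Fc^{\pm}$ by definition.

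Now suppose $u$ and $v$ are both non-separable with the same weight $\cb$. In coordinate $k$, all entries of the factors of $u$ share a sign, and they sum to $c_k$; the same holds for $v$. Hence if $c_k\neq0$, the common sign in both $u$ and $v$ is $\operatorname{sign}(c_k)$. If $c_k=0$, all entries in that coordinate vanish in both $u$ and $v$. Choose $\varepsilon$ with $\varepsilon_k=\operatorname{sign}(c_k)$, and $+1$ when $c_k=0$. Then all $\varepsilon\ab_i$ and $\varepsilon\bb_i$ lie in $\Pc^{\pm}\cap\RR^n_{\ge0}\cap\ZZ^n=\Pc\cap\ZZ^n$, where the last equality uses that $\Pc$ is anti-blocking. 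So $\prod x_{\varepsilon\ab_i}-\prod x_{\varepsilon\bb_i}\in I_{\Pc}$, and it can be written as a combination $\sum h_\ell f_\ell$ with $f_\ell\in\Fc$. Applying the ring automorphism $x_{\xb}\mapsto x_{\varepsilon\xb}$ of the polynomial ring carries this to $u-v=\sum h'_\ell f_\ell^{\varepsilon}$, where each $f_\ell^{\varepsilon}$ is of the first type in $\Fc^{\pm}$. Hence $u-v\in J$.

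Combining the two steps, $u\equiv u'\equiv v'\equiv v \pmod J$, where $u'$ and $v'$ are the non-separable normal forms; reduction preserves the weight, so $u'$ and $v'$ have the same weight $\cb$. This gives $I_{\Pc^{\pm}}=J$. The step I expect to need the most care is the reduction: checking that the chosen $\varepsilon$ really makes $\varepsilon(\ab_i+\ab_j)$ nonnegative, so that the pre-stated fact applies, and that the norm strictly decreases. The final transport step via the automorphism is routine.
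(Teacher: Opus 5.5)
Your proposal is correct and follows essentially the same route as the paper. You eliminate separable pairs of factors with the quadratic binomials of the second type, observe that the reduced monomials on both sides then lie in a common orthant determined by the signs of the common weight, and transport a representation by $\Fc$ back via the sign-flip automorphism $x_{\xb}\mapsto x_{\varepsilon\xb}$; your $\ell_1$-norm termination measure and your check that both reduced monomials share the same $\varepsilon$ supply details the paper leaves implicit in ``repeating the same argument finite times.''
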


\begin{proof}
It is easy to see that $\Fc^{\pm} \subset I_{{\Pc}^{\pm}}$.
Let
\[
f=
\prod_{i=1}^rx_{\varepsilon_i \alpha_i}-\prod_{i=1}^r x_{\varepsilon_i' \beta_i}
\in I_{{\Pc}^{\pm}},
\]
where $\alpha_i, \beta_i \in \Pc \cap \ZZ^n$,
and $\varepsilon_i, \varepsilon_i' \in \{-1,1\}^n$.
We show that $f$ is generated by $\Fc^\pm$.
Suppose that $\varepsilon_{j_1} \alpha_{j_1}, 
\varepsilon_{j_2} \alpha_{j_2}$ are separable
for some $1 \le j_1 < j_2 \le r$.
Let
\[
f' = 
x_{\varepsilon_{j_1} \alpha_{j_1}} x_{\varepsilon_{j_2} \alpha_{j_2}}-x_{\varepsilon \pb}x_{\varepsilon \qb},
\]
where 
$\varepsilon \in \{-1,1\}^n$ such that $\varepsilon(\varepsilon_{j_1} \alpha_{j_1} +\varepsilon_{j_2} \alpha_{j_2})=\pb+\qb$ and $\pb, \qb \in \Pc \cap \ZZ^n$.
Then $f'$ belongs to $\Fc^\pm$, and 
\[
x_{\varepsilon \pb}x_{\varepsilon \qb}
\prod_{i \ne j_1,j_2} x_{\varepsilon_i \alpha_i}-\prod_{i=1}^r x_{\varepsilon_i' \beta_i}
=
f - f'
\prod_{i \ne j_1,j_2} x_{\varepsilon_i \alpha_i}
\in I_{{\Pc}^{\pm}}.
\]
We may consider this binomial instead of $f$.
Repeating the same argument finite times,
we may assume that 
\[
f=
\prod_{i=1}^rx_{\varepsilon \alpha_i}-\prod_{i=1}^r x_{\varepsilon  \beta_i},
\]
for some $\varepsilon \in \{-1,1\}^n$.
Then $\prod_{i=1}^r x_{\alpha_i}-\prod_{i=1}^r x_{\beta_i}$ belongs to $I_\Pc$.
If $\prod_{i=1}^r x_{\alpha_i}-\prod_{i=1}^r x_{\beta_i}$
is generated by 
\[
\prod_{i=1}^{r_1} x_{\ab_i^{(1)}}-\prod_{i=1}^{r_1} x_{\bb_i^{(1)}},
 \ \ \ldots \ \ , \ \ 
\prod_{i=1}^{r_s} x_{\ab_i^{(s)}}-\prod_{i=1}^{r_s} x_{\bb_i^{(s)}}
\in \Fc,\]
then $f$ is generated by
\[
\prod_{i=1}^{r_1} x_{\varepsilon \ab_i^{(1)}}-\prod_{i=1}^{r_1} x_{\varepsilon \bb_i^{(1)}},
 \ \ \ldots \ \ , \ \ 
\prod_{i=1}^{r_s} x_{\varepsilon \ab_i^{(s)}}-\prod_{i=1}^{r_s} x_{\varepsilon \bb_i^{(s)}}
\in \Fc^\pm,\]
as desired.
\end{proof}

As an immediate consequence, we obtain the ``only if'' part of Theorem~\ref{thm:quadiff}.
\begin{Corollary}\label{cor:quad-ascends}
Let \(\Pc \subset \RR^n\) be an anti-blocking lattice polytope.
If \(I_{\Pc}\) is generated by quadratic binomials, then so is \(I_{\Pc^\pm}\).
\end{Corollary}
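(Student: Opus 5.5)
This follows directly from Proposition~\ref{prop:GB} by taking $\Fc$ to be a set of quadratic binomials generating $I_{\Pc}$. Such a set exists by hypothesis, and each element has the form $x_{\ab_1}x_{\ab_2}-x_{\bb_1}x_{\bb_2}$ with $\ab_i,\bb_i\in\Pc\cap\ZZ^n$. Proposition~\ref{prop:GB} then says that $I_{\Pc^\pm}$ is generated by $\Fc^{\pm}$, so it suffices to check that every element of $\Fc^{\pm}$ is a quadratic binomial.

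The set $\Fc^{\pm}$ contains two kinds of elements.

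\textbf{Twisted generators.} The first kind are the binomials $x_{\varepsilon\ab_1}x_{\varepsilon\ab_2}-x_{\varepsilon\bb_1}x_{\varepsilon\bb_2}$ obtained from elements of $\Fc$. They have the same degree as the original generators, namely $2$. Their variables are well defined because $\varepsilon\ab_i,\varepsilon\bb_i\in\Pc^{\pm}\cap\ZZ^n$: the map $\xb\mapsto\varepsilon\xb$ preserves both $\Pc^{\pm}$ and $\ZZ^n$.

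\textbf{Separable generators.} The second kind are the binomials $x_{\ab}x_{\bb}-x_{\varepsilon\pb}x_{\varepsilon\qb}$ with $\ab,\bb$ separable. These are quadratic by construction.

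One degenerate case needs a word. If a binomial becomes zero, or a binomial with both sides equal, it can be discarded without changing the ideal generated. If $I_{\Pc}=0$, then $\Fc$ may be taken empty, and $\Fc^{\pm}$ consists only of separable quadrics, which are still quadratic. Hence $I_{\Pc^\pm}$ is generated by quadratic binomials.

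There is no real obstacle here. All of the work is already contained in Proposition~\ref{prop:GB}, and the only point to confirm is that the operation $\Fc\mapsto\Fc^{\pm}$ preserves degrees, which is immediate from its definition.
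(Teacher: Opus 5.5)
Your proposal is correct and is exactly the paper's argument: the paper obtains the corollary as an immediate consequence of Proposition~\ref{prop:GB}, applied to a generating set $\Fc$ of quadratic binomials, since both kinds of elements of $\Fc^{\pm}$ are quadratic. Your handling of the degenerate case $I_{\Pc}=0$, where $\Fc^{\pm}$ consists only of the separable quadrics, is a harmless extra detail that the paper leaves implicit.
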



\subsection{Descending quadratic generation from \(I_{\Pc^\pm}\) to \(I_{\Pc}\)}
\label{subsec:proofquadraticity}

Let $\Pc \subset \RR^n$ be an anti-blocking lattice polytope, and let $A = \mathcal{P} \cap \ZZ^n$.
Let
$K[\xb^\pm]=K[x_{\xi} : \xi \in A^\pm]$
and 
$K[\xb]=K[x_{\ab} : \ab \in A]$
be polynomial rings
over a field $K$.
Then 
\[
I_{\Pc}=I_{\Pc^\pm}\cap K[\xb] \subset I_{\Pc^\pm}
\subset K[\xb^\pm].
\]
For an ordered list
\[
\Lambda=((\ab_1,\varepsilon_1),\ldots,(\ab_r,\varepsilon_r))
,
\]
where $\ab_j = (a_{j,1},\dots,a_{j,n})\in A$ and $\varepsilon_j = (\varepsilon_{j,1},\dots,\varepsilon_{j,n})\in\{-1,1\}^n$, define
\[
\omega(\Lambda)=(\omega_1(\Lambda), \dots, \omega_n(\Lambda)):=\sum_{j=1}^r \varepsilon_j\ab_j\in \ZZ^n.
\]

Assume that
$\omega(\Lambda)$ is nonnegative.
For each $k$-th entry $\omega_k(\Lambda)$ ($ \ge 0$) of $\omega(\Lambda) \in \ZZ_{\ge 0}^n$, define an ordered list 
\[
P_k(\Lambda)
:=
(1,\dots,1, 2,\dots,2, \dots, r, \dots,r),
\]
where each $j \in [r]$ appears
\[
\left\{\begin{array}{cl}
a_{j,k} & \mbox{if } \varepsilon_{j,k}=1,\\
\\
0 &  \mbox{if } \varepsilon_{j,k}=-1
\end{array}
\right.
\]
times in $P_k(\Lambda)$.
%

\begin{Lemma}\label{lem:c_is_welldefined}
With the notation above,
$\omega_k(\Lambda)$ is less than or equal to 
the length of $P_k(\Lambda)$.
\end{Lemma}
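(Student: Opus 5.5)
The plan is to compute both sides directly from the definitions. The comparison reduces to the fact that every coordinate of a lattice point of $A$ is nonnegative, because $\Pc \subset \RR_{\ge 0}^n$.

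First, I will record the length of $P_k(\Lambda)$. By construction, it is
\[
|P_k(\Lambda)| = \sum_{j\,:\,\varepsilon_{j,k}=1} a_{j,k}.
\]
Next, I will expand the $k$-th entry of $\omega(\Lambda)$ from its definition:
\[
\omega_k(\Lambda)=\sum_{j=1}^r \varepsilon_{j,k}a_{j,k}
=\sum_{j\,:\,\varepsilon_{j,k}=1} a_{j,k}\;-\sum_{j\,:\,\varepsilon_{j,k}=-1} a_{j,k}.
\]

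Since each $\ab_j\in A=\Pc\cap\ZZ^n\subset \ZZ_{\ge 0}^n$, every $a_{j,k}$ is nonnegative. Hence the subtracted sum is nonnegative, and
\[
\omega_k(\Lambda)\le \sum_{j\,:\,\varepsilon_{j,k}=1} a_{j,k}=|P_k(\Lambda)|.
\]
This is the claim.

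I expect no real obstacle here. The only point requiring care is that $P_k(\Lambda)$ is well defined, that is, each multiplicity $a_{j,k}$ is a nonnegative integer. This also follows from $\ab_j\in\ZZ_{\ge0}^n$. The hypothesis $\omega(\Lambda)\ge 0$ is not needed for the inequality itself. It only guarantees that $\omega_k(\Lambda)$ can be used as a nonnegative count, for instance as a number of entries taken from the list $P_k(\Lambda)$, presumably in the later construction.
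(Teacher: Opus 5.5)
Your proof is correct and is essentially the paper's argument. The paper also writes the length of $P_k(\Lambda)$ as $\sum_{j:\varepsilon_{j,k}=1} a_{j,k}$ and bounds $\omega_k(\Lambda)=\sum_{j}\varepsilon_{j,k}a_{j,k}$ by it, using (implicitly, where you make it explicit) that $a_{j,k}\ge 0$ because $A\subset\ZZ_{\ge 0}^n$.
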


\begin{proof}
Let $\ell$ be the length of $P_k(\Lambda)$.
    By construction, we have
\[
\ell = \sum_{j \; : \;  \varepsilon_{j,k}=1} a_{j,k}
\ge
\sum_{j=1}^r \varepsilon_{j,k}a_{j,k}
=
\omega_k(\Lambda).
\]
\end{proof}

For each $j\in[r]$, let $c_{j,k}(\Lambda)$ be the number of times that $j$ appears among the
first $\omega_k(\Lambda)$ entries of $P_k(\Lambda)$.

\begin{Example}\label{exa:welldefined}
If
$
(\varepsilon_{1,k}a_{1,k},\varepsilon_{2,k}a_{2,k},\varepsilon_{3,k}a_{3,k},\varepsilon_{4,k}a_{4,k})
=(+2,-1,+3,0),
$
then
\[P_k(\Lambda)=(1,1,3,3,3).\]
On the other hand, $\omega_k(\Lambda)=4$ and the first four  entries of $P_k(\Lambda)$ is $(1,1,3,3)$, and hence
$(c_{1,k}(\Lambda),c_{2,k}(\Lambda),c_{3,k}(\Lambda),c_{4,k}(\Lambda)) = (2,0,2,0)$.
\end{Example}

Now define
\[
\cb_j(\Lambda):=(c_{j,1}(\Lambda),\ldots,c_{j,n}(\Lambda))
\qquad (1\le j\le r),
\]
and set
\[
\nu(\Lambda):=(\cb_1(\Lambda),\ldots,\cb_r(\Lambda)).
\]

\begin{Lemma}\label{lem:canonical-positive-projection}
With the notation above, one has
\[
\cb_j(\Lambda)\in A
\qquad (1\le j\le r),
\]
and
\[
\sum_{j=1}^r \cb_j(\Lambda)=\omega(\Lambda).
\]
Moreover,
\[
\mathbf 0\leq \cb_j(\Lambda)\leq \ab_j
\qquad\text{coordinatewise}\qquad (1\le j\le r).
\]
\end{Lemma}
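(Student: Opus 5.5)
The plan is to verify the three assertions coordinatewise, directly from the definition of $P_k(\Lambda)$, and to deduce membership in $A$ from the anti-blocking property of $\Pc$. Fix a coordinate $k \in [n]$. By Lemma~\ref{lem:c_is_welldefined}, the first $\omega_k(\Lambda)$ entries of $P_k(\Lambda)$ exist, so each $c_{j,k}(\Lambda)$ is well defined.

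First we prove the bounds $0 \le c_{j,k}(\Lambda) \le a_{j,k}$. Since $c_{j,k}(\Lambda)$ counts occurrences of $j$, it is a nonnegative integer. It cannot exceed the total number of occurrences of $j$ in $P_k(\Lambda)$. That total is $a_{j,k}$ if $\varepsilon_{j,k}=1$ and $0$ if $\varepsilon_{j,k}=-1$, and in both cases it is at most $a_{j,k}$ because $\ab_j \in A \subset \RR^n_{\ge 0}$. This gives $\mathbf 0 \le \cb_j(\Lambda) \le \ab_j$ coordinatewise.

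Next we prove the sum identity. Every entry of $P_k(\Lambda)$ is some $j \in [r]$, so summing the counts over $j$ gives $\sum_{j=1}^r c_{j,k}(\Lambda) = \omega_k(\Lambda)$, the number of entries taken. Doing this for every $k$ yields $\sum_j \cb_j(\Lambda) = \omega(\Lambda)$.

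Finally we show $\cb_j(\Lambda)\in A$. The vector $\cb_j(\Lambda)$ is a lattice point with $0 \le \cb_j(\Lambda) \le \ab_j$ and $\ab_j \in \Pc$. Since $\Pc$ is anti-blocking, $\cb_j(\Lambda) \in \Pc$, hence $\cb_j(\Lambda) \in \Pc\cap\ZZ^n = A$.

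None of these steps is a real obstacle. The only points needing care are the well-definedness supplied by Lemma~\ref{lem:c_is_welldefined}, which relies on the standing assumption that $\omega(\Lambda)$ is nonnegative, and the observation that negative-sign entries contribute nothing to $P_k(\Lambda)$.
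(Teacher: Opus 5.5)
Your proposal is correct and follows the same route as the paper's proof. Both fix a coordinate $k$, read off $0\le c_{j,k}(\Lambda)\le a_{j,k}$ from the fact that $c_{j,k}(\Lambda)$ counts copies of $j$ selected from $P_k(\Lambda)$, conclude $\cb_j(\Lambda)\in A$ by the anti-blocking property, and obtain the sum identity from the total number $\omega_k(\Lambda)$ of selected entries. Your explicit appeal to Lemma~\ref{lem:c_is_welldefined} for well-definedness is a small extra point of care that the paper leaves implicit.
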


\begin{proof}
Fix a coordinate $k\in[n]$.
For each $j\in[r]$, the number $c_{j,k}(\Lambda)$ counts how many copies of $j$ are selected
from $P_k(\Lambda)$.
Therefore
\[
0\leq c_{j,k}(\Lambda)\leq a_{j,k},
\]
i.e.,
\[
\mathbf 0\leq \cb_j(\Lambda)\leq \ab_j
\qquad\text{coordinatewise}.
\]
Since $\Pc$ is anti-blocking and $\ab_j\in \Pc$, we obtain
\[
\cb_j(\Lambda)\in \Pc\cap\ZZ^n=A.
\]
Finally, since the equality
\[
\sum_{j=1}^r c_{j,k}(\Lambda)=\omega_k(\Lambda)
\]
holds for every coordinate $k$, we have
\[
\sum_{j=1}^r \cb_j(\Lambda)=\omega(\Lambda),
\]
as desired.
\end{proof}

For such an ordered list $\Lambda$, 
from Lemma \ref{lem:canonical-positive-projection},
$\nu(\Lambda) =
(\cb_1(\Lambda),\ldots,\cb_r(\Lambda))
$ belongs to $A^r$.
Hence we can define the monomial
\[
m(\nu(\Lambda)):=x_{\cb_1(\Lambda)}\cdots x_{\cb_r(\Lambda)}.
\]

\begin{Remark}\label{rem:nu-fixes-positive-general}
For an ordered list
$\Lambda=((\ab_1,\varepsilon),\ldots,(\ab_r,\varepsilon))$,
where $\ab_j\in A$ and
$\varepsilon =(1,\ldots,1)$,
$
\omega(\Lambda)=\ab_1+\cdots+\ab_r
$
and the construction above yields
$
\cb_j(\Lambda)=\ab_j
\ (1\le j\le r).
$
Hence
$
m(\nu(\Lambda))=x_{\ab_1}\cdots x_{\ab_r}.
$
\end{Remark}

\begin{Lemma}\label{lem:two-slot-compatibility}
Let
\begin{align*}
\Lambda&=((\ab_1,\varepsilon_1),\ldots,(\ab_{r-2},\varepsilon_{r-2}),(\ab,\varepsilon),(\bb,\delta)),\\
\Lambda'&=((\ab_1,\varepsilon_1),\ldots,(\ab_{r-2},\varepsilon_{r-2}),(\ab',\varepsilon'),(\bb',\delta'))
\end{align*}
be ordered lists  with $\ab_j, \ab, \bb, \ab', \bb' \in A$, $\varepsilon_j, \varepsilon, \delta, \varepsilon', \delta' \in\{-1,1\}^n$.
Suppose that
\[
\omega(\Lambda)=\omega(\Lambda')\in \ZZ_{\geq 0}^n.
\]
Then 
$m(\nu(\Lambda)) -m(\nu(\Lambda')) $
is a monomial multiple of a quadratic binomial in $I_\mathcal{P}$.
\end{Lemma}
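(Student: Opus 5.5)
The plan is to show that the greedy construction of $\nu$ is determined, on the first $r-2$ slots, only by data that $\Lambda$ and $\Lambda'$ share. Then the two monomials have the common factor $\prod_{j\le r-2}x_{\cb_j}$, and the remaining quadratic parts have the same degree-one image. No earlier results are needed beyond Lemma~\ref{lem:canonical-positive-projection}.

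\emph{Step 1: the first $r-2$ slots agree.} Fix a coordinate $k$ and set $\omega_k:=\omega_k(\Lambda)=\omega_k(\Lambda')$. For $j\in[r]$, let $p_{j,k}$ be the multiplicity of $j$ in $P_k(\Lambda)$, that is, $a_{j,k}$ if $\varepsilon_{j,k}=1$ and $0$ otherwise. Let $S_{j,k}=\sum_{i<j}p_{i,k}$. Since $P_k(\Lambda)$ lists the index $1$ first, then $2$, and so on, the number of copies of $j$ among the first $\omega_k$ entries is
\[
c_{j,k}(\Lambda)=\min\bigl(p_{j,k},\ \max(\omega_k-S_{j,k},0)\bigr).
\]
For $j\le r-2$ the quantities $p_{j,k}$ and $S_{j,k}$ depend only on $(\ab_1,\varepsilon_1),\dots,(\ab_{j},\varepsilon_{j})$. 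These pairs are common to $\Lambda$ and $\Lambda'$, and $\omega_k$ is common by hypothesis. Hence $c_{j,k}(\Lambda)=c_{j,k}(\Lambda')$ for all $j\le r-2$ and all $k$, so $\cb_j(\Lambda)=\cb_j(\Lambda')=:\cb_j$ for $j\le r-2$. The tail entries of $P_k$ coming from slots $r-1,r$ differ between $\Lambda$ and $\Lambda'$, but they lie after all entries of slots $1,\dots,r-2$. So the difference in the tails cannot affect the counts for $j\le r-2$; this is the one point to check carefully.

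\emph{Step 2: the last two slots have the same sum.} By Lemma~\ref{lem:canonical-positive-projection},
\[
\cb_{r-1}(\Lambda)+\cb_r(\Lambda)=\omega(\Lambda)-\sum_{j\le r-2}\cb_j=\omega(\Lambda')-\sum_{j\le r-2}\cb_j=\cb_{r-1}(\Lambda')+\cb_r(\Lambda'),
\]
and all four of these vectors lie in $A$.

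\emph{Step 3: factor out the common monomial.} From Steps 1 and 2,
\[
m(\nu(\Lambda))-m(\nu(\Lambda'))=\Bigl(\prod_{j\le r-2}x_{\cb_j}\Bigr)\bigl(x_{\cb_{r-1}(\Lambda)}x_{\cb_r(\Lambda)}-x_{\cb_{r-1}(\Lambda')}x_{\cb_r(\Lambda')}\bigr).
\]
By Step 2, the two quadratic monomials in the second factor have the same image $\mathbf{t}^{\omega'}s^2$ under $\pi$, where $\omega'$ is the common sum from Step 2. So the second factor is a quadratic binomial in $I_\Pc$. If it is zero, the statement holds trivially.

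The only real obstacle is the bookkeeping in Step 1, namely making the closed formula for $c_{j,k}$ precise and verifying that it ignores slots $r-1$ and $r$.
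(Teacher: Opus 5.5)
Your proposal is correct and follows the paper's proof: the paper also notes that the blocks of $P_k(\Lambda)$ and $P_k(\Lambda')$ from slots $1,\dots,r-2$ coincide and precede the last two slots, so with the common cutoff $\omega_k$ one gets $c_{j,k}(\Lambda)=c_{j,k}(\Lambda')$ for $j\le r-2$. It then uses Lemma~\ref{lem:canonical-positive-projection} to get $\cb+\db=\cb'+\db'$ and factors out the common monomial. Your closed formula $c_{j,k}=\min\bigl(p_{j,k},\max(\omega_k-S_{j,k},0)\bigr)$ is correct, and it simply makes precise the paper's verbal argument that the entries sit in the same positions and share the same cutoff.
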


\begin{proof}
Let
\[
\nu(\Lambda)=(\cb_1,\ldots,\cb_{r-2},\cb,\db),\ \ \ 
\nu(\Lambda')=(\cb_1',\ldots,\cb_{r-2}',\cb',\db').
\]

Fix a coordinate $k\in[n]$.
Consider the ordered lists $P_k(\Lambda)$ and $P_k(\Lambda')$ used in the definition of
$\nu(\Lambda)$ and $\nu(\Lambda')$.
The contributions coming from the first $r-2$ entries of $\Lambda$, $\Lambda'$ are identical in both lists and occur
before the last two entries of $\Lambda$, $\Lambda'$.
Since
$\omega_k(\Lambda)=\omega_k(\Lambda')$,
the number of copies of the index $j$ selected from the first $r-2$ entries is the same in both lists
for every $1 \le j\le r-2$.
Hence
\[
c_{j,k}(\Lambda)=c_{j,k}(\Lambda')
\qquad (1\le j\le r-2).
\]
Since this holds for every coordinate $k$, we obtain
\[
\cb_j=\cb_j'
\qquad (1\le j\le r-2).
\]

Finally, since
\[
\sum_{j=1}^{r-2}\cb_j+\cb+\db
=
\omega(\Lambda)
=
\omega(\Lambda')
=
\sum_{j=1}^{r-2}\cb_j'+\cb'+\db',
\]
we have
$\cb+\db=\cb'+\db'$.
Therefore
$x_{\cb}x_{\db}-x_{\cb'}x_{\db'}\in I_{\Pc}$
and
\[
m(\nu(\Lambda)) -m(\nu(\Lambda')) 
= 
x_{\cb_1} \cdots x_{\cb_{r-2}} (x_{\cb}x_{\db}-x_{\cb'}x_{\db'}) .
\]
\end{proof}

\begin{Lemma}\label{lem:adjacent-swap}
Let
\begin{align*}
\Lambda & = 
((\ab_1,\varepsilon_1),\ldots,(\ab_{p-1},\varepsilon_{p-1}),(\ab_p,\varepsilon_p),(\ab_{p+1},\varepsilon_{p+1}) ,(\ab_{p+2},\varepsilon_{p+2})   ,\ldots,(\ab_r,\varepsilon_r)),\\
\Lambda' & =
((\ab_1,\varepsilon_1),\ldots,(\ab_{p-1},\varepsilon_{p-1}),(\ab_{p+1},\varepsilon_{p+1}),(\ab_{p},\varepsilon_{p}) ,(\ab_{p+2},\varepsilon_{p+2})    ,\ldots,(\ab_r,\varepsilon_r))\end{align*}
be ordered lists with $\ab_j\in A$, $\varepsilon_j\in\{-1,1\}^n$.
Suppose that $\omega(\Lambda) \ (=\omega(\Lambda'))$
is nonnegative.
Then 
$m(\nu(\Lambda))
-
m(\nu(\Lambda'))$ 
is a monomial multiple of a quadratic binomial in $I_\mathcal{P}$.
\end{Lemma}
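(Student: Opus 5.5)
Fix a coordinate $k\in[n]$ and compare $P_k(\Lambda)$ with $P_k(\Lambda')$, exactly as in the proof of Lemma~\ref{lem:two-slot-compatibility}. Write $\nu(\Lambda)=(\cb_1,\ldots,\cb_r)$, indexed by positions in $\Lambda$. Write $\nu(\Lambda')=(\cb'_1,\ldots,\cb'_r)$, indexed by positions in $\Lambda'$, so that positions $p$ and $p+1$ of $\Lambda'$ carry $(\ab_{p+1},\varepsilon_{p+1})$ and $(\ab_p,\varepsilon_p)$, respectively. Since $\omega(\Lambda)=\omega(\Lambda')$, both lists are truncated after the same number $\omega_k(\Lambda)$ of entries. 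In both lists, the block coming from positions $1,\dots,p-1$ is identical and comes first. The block coming from positions $p,p+1$ has the same total length in both lists; only the order of its two sub-blocks is swapped. The block coming from positions $p+2,\dots,r$ is identical and comes last.

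From this I would derive $c_{j,k}(\Lambda)=c_{j,k}(\Lambda')$ for all $j\notin\{p,p+1\}$. For $j\le p-1$, the first $\omega_k$ entries meet the common prefix identically. For $j\ge p+2$, the entries preceding index $j$'s block have the same total length $L$ in both lists, so the number of copies of $j$ selected is $\min(\max(\omega_k-L,0),\text{block length of }j)$ in both cases. Hence $\cb_j=\cb'_j$ for $j\notin\{p,p+1\}$, for every $k$.

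By Lemma~\ref{lem:canonical-positive-projection}, $\sum_j\cb_j=\omega(\Lambda)=\omega(\Lambda')=\sum_j\cb'_j$, so $\cb_p+\cb_{p+1}=\cb'_p+\cb'_{p+1}$. All these vectors lie in $A$, again by Lemma~\ref{lem:canonical-positive-projection}. Therefore $x_{\cb_p}x_{\cb_{p+1}}-x_{\cb'_p}x_{\cb'_{p+1}}\in I_\Pc$, and
\[
m(\nu(\Lambda))-m(\nu(\Lambda'))=\Bigl(\prod_{j\ne p,p+1}x_{\cb_j}\Bigr)\bigl(x_{\cb_p}x_{\cb_{p+1}}-x_{\cb'_p}x_{\cb'_{p+1}}\bigr),
\]
which is the desired monomial multiple. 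If the quadratic binomial happens to be zero, the difference is $0$, which is trivially fine.

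The only delicate step is the bookkeeping for the indices $j\ge p+2$. There one must check that the swap leaves unchanged the truncation point relative to the later blocks. This holds because the combined length of the $p,p+1$ blocks equals $\sum_{j\in\{p,p+1\},\ \varepsilon_{j,k}=1}a_{j,k}$, which is symmetric in the two indices. Everything else follows directly from Lemma~\ref{lem:canonical-positive-projection}.
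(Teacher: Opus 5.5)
Your proposal is correct and follows essentially the same route as the paper. You show $\cb_j=\cb_j'$ for $j\neq p,p+1$ by noting that the swap only reorders the two middle blocks of $P_k(\Lambda)$ while the cutoff $\omega_k(\Lambda)=\omega_k(\Lambda')$ is unchanged; you then use $\sum_j\cb_j=\omega(\Lambda)=\sum_j\cb_j'$ to get $\cb_p+\cb_{p+1}=\cb_p'+\cb_{p+1}'$ and factor out the common monomial, which is exactly the paper's argument (your explicit count $\min(\max(\omega_k-L,0),\cdot)$ just makes precise the paper's remark that the copies of each $j\neq p,p+1$ occupy the same positions in both lists).
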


\begin{proof}
Write
\[
\nu(\Lambda)=(\cb_1,\ldots,\cb_r),
\qquad
\nu(\Lambda')=(\cb_1',\ldots,\cb_r').
\]

We first show that
\[
\cb_j=\cb_j'
\qquad (j\neq p,p+1).
\]
Fix $j$ ($\ne p, p+1$), and fix a coordinate $k\in[n]$.
Recall that $c_{j,k}(\Lambda)$ is the number of copies of the index $j$
among the first $\omega_k(\Lambda)$ entries of $P_k(\Lambda)$, and similarly for
$c_{j,k}(\Lambda')$.

The ordered lists $P_k(\Lambda)$ and $P_k(\Lambda')$ differ only in the relative order of
the block of $p$'s and the block of $(p+1)$'s.
In particular, for every $j\neq p,p+1$, the copies of the index $j$ occur in exactly the same
positions in $P_k(\Lambda)$ and in $P_k(\Lambda')$, i.e.,
\begin{align*}
P_k(\Lambda)
&=
(\underbrace{1,\dots,1}_{m_1}, \underbrace{2,\dots,2}_{m_2}, \dots, \underbrace{r, \dots,r}_{m_r}),\\
P_k(\Lambda')
&=
(\underbrace{1,\dots,1}_{n_1}, \underbrace{2,\dots,2}_{n_2}, \dots, \underbrace{r, \dots,r}_{n_r}),
\end{align*}
where $m_j=n_j \geq 0$ for $j \neq p,p+1$, $m_{p}=n_{p+1} \geq 0$ and $m_{p+1}=n_{p} \geq 0$.
For example, if $p=2$ and 
$P_k(\Lambda)=(1,1,2,2,2,3,5,5)$, then 
$P_k(\Lambda')=(1,1,2,3,3,3,5,5)$.

Since
\[
\omega_k(\Lambda)=\omega_k(\Lambda'),
\]
the cutoff position is the same in both lists.
Therefore the number of copies of $j$ among the first $\omega_k(\Lambda)$ entries of
$P_k(\Lambda)$ is equal to the number of copies of $j$ among the first $\omega_k(\Lambda')$
entries of $P_k(\Lambda')$, that is,
\[
c_{j,k}(\Lambda)=c_{j,k}(\Lambda')
\qquad (j\neq p,p+1).
\]
Since this holds for every coordinate $k$, we obtain
\[
\cb_j=\cb_j'
\qquad (j\neq p,p+1).
\]

Now
\[
\sum_{j=1}^r \cb_j
=
\omega(\Lambda)
=
\omega(\Lambda')
=
\sum_{j=1}^r \cb_j'.
\]
Since $\cb_j=\cb_j'$ for all $j\neq p,p+1$, it follows that
\[
\cb_p+\cb_{p+1}=\cb_p'+\cb_{p+1}'.
\]
Hence
\[
x_{\cb_p}x_{\cb_{p+1}}-x_{\cb_p'}x_{\cb_{p+1}'}\in I_{\Pc}.
\]

Finally,
\[
m(\nu(\Lambda))
=
\left(\prod_{j\neq p,p+1} x_{\cb_j}\right)x_{\cb_p}x_{\cb_{p+1}},
\qquad
m(\nu(\Lambda'))
=
\left(\prod_{j\neq p,p+1} x_{\cb_j}\right)x_{\cb_p'}x_{\cb_{p+1}'},
\]
and hence
\[
m(\nu(\Lambda))-m(\nu(\Lambda'))
=
\left(\prod_{j\neq p,p+1} x_{\cb_j}\right)
\bigl(x_{\cb_p}x_{\cb_{p+1}}-x_{\cb_p'}x_{\cb_{p+1}'}\bigr).
\]
Therefore
\[
m(\nu(\Lambda))-m(\nu(\Lambda'))
\]
is a monomial multiple of a quadratic binomial in $I_{\Pc}$.
\end{proof}

\begin{Corollary}\label{cor:ordering-independence}
Let $\Lambda$ and $\Lambda'$ be two ordered lists
\begin{align*}
\Lambda&=((\ab_1,\varepsilon_1),\ldots,(\ab_{r},\varepsilon_{r})),\\
\Lambda'&=((\ab_1',\varepsilon_1'),\ldots,(\ab_{r}',\varepsilon_{r}'))
\end{align*}
with $\ab_j , \ab_j'\in A$ and $\varepsilon_j, \varepsilon_j'\in\{-1,1\}^n$
such that
\[
x_{\varepsilon_1 \ab_1} \cdots  x_{\varepsilon_r \ab_r}
=
x_{\varepsilon_1' \ab_1'} \cdots  x_{\varepsilon_r' \ab_r'}.
\]
Suppose that
$
\omega(\Lambda) \ (=\omega(\Lambda') ) 
$ is nonnegative.
Then 
the binomial $m(\nu(\Lambda)) - m(\nu(\Lambda'))$
%
in $K[\xb]$
is generated by quadratic binomials in $I_{\Pc}$.
\end{Corollary}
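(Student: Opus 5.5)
The equality of monomials $x_{\varepsilon_1 \ab_1} \cdots x_{\varepsilon_r \ab_r}=x_{\varepsilon_1' \ab_1'} \cdots x_{\varepsilon_r' \ab_r'}$ in $K[\xb^\pm]$ means that the two multisets of lattice points $\{\varepsilon_j\ab_j\}$ and $\{\varepsilon_j'\ab_j'\}$ of $A^\pm$ coincide. Hence there is a permutation $\sigma$ of $[r]$ with $\varepsilon_{\sigma(j)}'\ab_{\sigma(j)}' = \varepsilon_j\ab_j$ for every $j$. Note, however, that the pair $(\ab_j,\varepsilon_j)$ is not determined by the point $\varepsilon_j\ab_j$: if $a_{j,k}=0$ then the sign $\varepsilon_{j,k}$ is arbitrary. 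So I will pass from $\Lambda$ to $\Lambda'$ in two kinds of moves: (i) changing signs $\varepsilon_{j,k}$ in coordinates where $a_{j,k}=0$, and (ii) adjacent transpositions of entries.

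For moves of type (i), let $\Lambda''$ be obtained from $\Lambda$ by replacing $(\ab_j,\varepsilon_j)$ with $(\ab_j,\tilde\varepsilon_j)$, where $\tilde\varepsilon_j\ab_j=\varepsilon_j\ab_j$. Then $\omega(\Lambda'')=\omega(\Lambda)$. Moreover, for each $k$ the list $P_k$ is unchanged: the index $j$ appears $a_{j,k}$ times if $\varepsilon_{j,k}=1$ and $0$ times otherwise, and changing a sign where $a_{j,k}=0$ does not affect this count. Therefore $\nu(\Lambda'')=\nu(\Lambda)$, and the monomials $m(\nu(\cdot))$ are literally equal. So without loss of generality the sign vectors can be normalized, e.g.\ $\varepsilon_{j,k}=1$ whenever $a_{j,k}=0$. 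After normalizing both $\Lambda$ and $\Lambda'$ this way, the pair $(\ab_j,\varepsilon_j)$ is recovered from $\varepsilon_j\ab_j$ as $\ab_j=|\varepsilon_j\ab_j|$, with the sign pattern forced. Consequently the normalized $\Lambda'$ is exactly a reordering of the normalized $\Lambda$ by $\sigma$.

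For moves of type (ii), write $\sigma$ as a product of adjacent transpositions, giving a chain $\Lambda=\Lambda^{(0)},\Lambda^{(1)},\ldots,\Lambda^{(N)}=\Lambda'$ in which consecutive lists differ by swapping two adjacent entries. Every $\Lambda^{(i)}$ has the same $\omega$, since $\omega$ is a sum and hence independent of order, so all of them have nonnegative $\omega$. Lemma~\ref{lem:adjacent-swap} then says that each $m(\nu(\Lambda^{(i)}))-m(\nu(\Lambda^{(i+1)}))$ is a monomial multiple of a quadratic binomial in $I_\Pc$. Summing the telescoping differences expresses $m(\nu(\Lambda))-m(\nu(\Lambda'))$ in the ideal generated by quadratic binomials of $I_\Pc$.

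The only delicate point is the non-uniqueness of $(\ab_j,\varepsilon_j)$ handled in the first step. One must check carefully that $P_k(\Lambda)$ truly ignores signs in coordinates with $a_{j,k}=0$, and this holds because such $j$ contributes zero copies either way. Everything else is a direct application of Lemma~\ref{lem:adjacent-swap}.
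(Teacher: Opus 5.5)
Your proposal is correct and follows the paper's proof essentially verbatim. Like the paper, you first normalize the signs in coordinates where $a_{j,k}=0$, which leaves $P_k$, $\omega$ and $\nu$ unchanged. This makes each pair $(\ab_j,\varepsilon_j)$ determined by the point $\varepsilon_j\ab_j$, so $\Lambda'$ becomes a permutation of $\Lambda$, and you finish by decomposing that permutation into adjacent transpositions and applying Lemma~\ref{lem:adjacent-swap}.
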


\begin{proof}
Set $\varepsilon_i=(\varepsilon_{i1},\ldots,\varepsilon_{in})$ and $\ab_i=(a_{i1},\ldots,a_{in})$.
We may suppose that for $\Lambda$,
\(\varepsilon_{ij}=1\) whenever \(a_{ij}=0\), and similarly for
\(\Lambda'\). 
Indeed, changing \(\varepsilon_{ij}\) when \(a_{ij}=0\) does not change the lattice point
\(\varepsilon_i\ab_i\), nor the ordered lists \(P_k(\Lambda)\), and hence does not change
\(\omega(\Lambda)\), \(\nu(\Lambda)\), or \(m(\nu(\Lambda))\).
With this convention, the representation of each lattice point
\(\varepsilon_i\ab_i\in A^\pm\) is unique. Hence the assumption
\[
x_{\varepsilon_1 \ab_1} \cdots  x_{\varepsilon_r \ab_r}
=
x_{\varepsilon_1' \ab_1'} \cdots  x_{\varepsilon_r' \ab_r'}
\]
implies that \(\Lambda'\) is obtained from \(\Lambda\) by a permutation of the entries.
Since any permutation is a product of adjacent transpositions, the assertion follows from
Lemma~\ref{lem:adjacent-swap}.
\end{proof}

We are now in the position to show the ``if" part of Theorem \ref{thm:quadiff}.
\begin{Proposition}\label{prop:quadratic-descends}
Let \(\Pc \subset \RR^n\) be an anti-blocking lattice polytope.
If \(I_{\Pc^\pm}\) is generated by quadratic binomials, then  so is \(I_{\Pc}\).
\end{Proposition}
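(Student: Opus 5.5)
Take a binomial $f = x_{\ab_1}\cdots x_{\ab_r} - x_{\bb_1}\cdots x_{\bb_r}\in I_{\Pc}$ with $\ab_j,\bb_j\in A$. Since $I_{\Pc}\subset I_{\Pc^\pm}$ and the latter is generated by quadratic binomials, there is a chain of monomials in $K[\xb^\pm]$
\[
u_0 = x_{\ab_1}\cdots x_{\ab_r},\ u_1,\ \ldots,\ u_N = x_{\bb_1}\cdots x_{\bb_r},
\]
in which each step $u_{i}\to u_{i+1}$ replaces a quadratic factor $x_{\xi}x_{\eta}$ by $x_{\xi'}x_{\eta'}$, where $x_\xi x_\eta - x_{\xi'}x_{\eta'}\in I_{\Pc^\pm}$ (so $\xi+\eta=\xi'+\eta'$). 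This is the standard fact that binomial ideals generated by quadratic binomials connect fibers through quadratic moves. All monomials in the chain lie in the same fiber, so their degree-vector sum equals $\omega:=\ab_1+\cdots+\ab_r\in\ZZ_{\ge0}^n$.

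Write each $u_i$ as $x_{\varepsilon_1\ab'_1}\cdots x_{\varepsilon_r\ab'_r}$ and choose an ordered list $\Lambda_i$ representing it, so that $\omega(\Lambda_i)=\omega$ is nonnegative. By Corollary~\ref{cor:ordering-independence}, the monomial $m(\nu(\Lambda_i))\in K[\xb]$ is well defined modulo the ideal generated by quadratic binomials of $I_\Pc$; call this ideal $J$. By Remark~\ref{rem:nu-fixes-positive-general}, we get $m(\nu(\Lambda_0))=u_0$ and $m(\nu(\Lambda_N))=u_N$.

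For a single step, order the lists $\Lambda_i$ and $\Lambda_{i+1}$ so that the two changed factors occupy the last two slots and the remaining $r-2$ entries coincide. Lemma~\ref{lem:two-slot-compatibility} then shows that $m(\nu(\Lambda_i))-m(\nu(\Lambda_{i+1}))$ is a monomial multiple of a quadratic binomial in $I_\Pc$, hence lies in $J$. Reordering the lists is harmless: Corollary~\ref{cor:ordering-independence} says that changing the representation of $u_i$ by permuting its entries only changes $m(\nu(\cdot))$ modulo $J$. Telescoping along the chain gives $f = u_0-u_N\in J$. Since toric ideals are spanned by binomials, this gives $I_\Pc=J$.

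The main point to verify carefully is the first step. The connectivity claim holds because $I_{\Pc^\pm}$ is a graded toric ideal generated by quadratic binomials: any binomial in it is a combination $\sum c_i w_i g_i$ of monomial multiples of quadratic generators, and the standard argument (for example, via the graph on the fiber) gives a path of moves. The remaining bookkeeping is only to ensure that each $\Lambda_i$ has nonnegative $\omega$, which holds automatically because $\omega$ is constant along the fiber.
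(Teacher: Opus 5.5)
Your proposal is correct and follows essentially the same route as the paper. You take a quadratic-move chain $u=M_0,\ldots,M_t=v$ in $I_{\Pc^\pm}$ and lift each step via Lemma~\ref{lem:two-slot-compatibility}, with the two changed factors placed in the last two slots. You glue consecutive steps at each intermediate monomial via Corollary~\ref{cor:ordering-independence}, and fix the endpoints via Remark~\ref{rem:nu-fixes-positive-general}. One point is worth making explicit: any ordering of a list representing $u$ (or $v$), with any choice of signs at zero coordinates, still gives $m(\nu(\cdot))=u$ (or $v$), because then every entry of each $P_k$ is selected. The paper's ``we may assume $\Lambda_u=\Lambda_0^{(1)}$'' relies on this without saying so.
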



\begin{proof}
Suppose that $I_{\Pc^{\pm}}$ is generated by quadratic binomials.
Let
\[
u-v = x_{\alpha_1} \cdots x_{\alpha_r} - x_{\beta_1} \cdots x_{\beta_r} \in I_{\Pc}
\]
be a homogeneous binomial of degree $r$.
Let 
\begin{align*}
\Lambda_u &= ( (\alpha_1, \varepsilon),\dots,(\alpha_r, \varepsilon) ),     \\
\Lambda_v &= ( (\beta_1, \varepsilon),\dots,(\beta_r, \varepsilon) ),  
\end{align*}
where $\varepsilon = (1,\dots,1)$.
Since $u-v$ belongs to $I_\mathcal{P} \subset K[\xb]$,
it follows that
\[
\omega(\Lambda_u) = \alpha_1 + \dots +\alpha_r
= \beta_1 + \dots + \beta_r = \omega(\Lambda_v)
\in \ZZ_{\ge 0}^n.
\]
%
By Remark~\ref{rem:nu-fixes-positive-general}, we have
\begin{equation}\label{eq:shoki}
m(\nu(\Lambda_u))=u,
\qquad
m(\nu(\Lambda_v))=v.   
\end{equation}


Since
$u-v\in I_{\Pc^\pm}$
and since $I_{\Pc^{\pm}}$ is generated by quadratic binomials, there exists a sequence of monomials
\[
u=M_0,M_1,\ldots,M_t=v
\]
of degree $r$
such that for each $1\leq k\leq t$, the difference
$M_{k-1}-M_k$
is a monomial multiple of a quadratic binomial in $I_{\Pc^\pm}$, i.e., 
\[
M_{k-1}-M_k
=
N_k
(x_{\varepsilon_k\ab_k}x_{\delta_k\bb_k} -  x_{\varepsilon_k'\ab_k'}x_{\delta_k'\bb_k'})
=
N_k \, x_{\varepsilon_k\ab_k}x_{\delta_k\bb_k} - N_k \, x_{\varepsilon_k'\ab_k'}x_{\delta_k'\bb_k'}
\]
for some monomial 
\[
N_k=x_{\varepsilon_1^{(k)} \gamma_1^{(k)}} \cdots x_{\varepsilon_{r-2}^{(k)} \gamma_{r-2}^{(k)}} \in K[\xb^{\pm}]
\]
with
$ \gamma_j^{(k)} \in A$ and $\varepsilon_j^{(k)} \in \{-1,1\}^n$
and some quadratic binomial 
\[
x_{\varepsilon_k\ab_k}x_{\delta_k\bb_k} -  x_{\varepsilon_k'\ab_k'}x_{\delta_k'\bb_k'} \in I_{\Pc^\pm}
\]
with
$\ab_k,\bb_k,\ab_k',\bb_k'\in A$ and $
\varepsilon_k,\delta_k,\varepsilon_k',\delta_k'\in\{-1,1\}^n.
$
%
%
Let
\[
\Gamma_k:= ( (\gamma_1^{(k)},  \varepsilon_1^{(k)} ),
\dots,(\gamma_{r-2}^{(k)},  \varepsilon_{r-2}^{(k)} ))
\]
be the list associated with $N_k$.
Define
\[
\Lambda_{k-1}^{(k)}:=(\Gamma_k,(\ab_k,\varepsilon_k),(\bb_k,\delta_k)),
\qquad
\Lambda_k^{(k)}:=(\Gamma_k,(\ab_k',\varepsilon_k'),(\bb_k',\delta_k')).
\]
We may assume that $\Lambda_u = \Lambda_0^{(1)}$
and $\Lambda_v = \Lambda_t^{(t)}$.
Then
\[
\omega(\Lambda_{k-1}^{(k)})=\omega(\Lambda_k^{(k)})=\omega(\Lambda_u) 
\in \ZZ_{\ge 0}^n
\]
for all $k$.
Set
\[
P_{k-1}^{(k)}:=m(\nu(\Lambda_{k-1}^{(k)})),
\qquad
P_k^{(k)}:=m(\nu(\Lambda_k^{(k)})).
\]
%
From \eqref{eq:shoki}, we have
$P_0^{(1)}=u$ and 
$
P_t^{(t)}=v.
$
By Lemma~\ref{lem:two-slot-compatibility}, the difference
$P_{k-1}^{(k)}-P_k^{(k)}$
is a monomial multiple of a quadratic binomial in $I_{\Pc}$.
Now fix $k$ with $1\leq k\leq t-1$.
The two monomials
$P_k^{(k)}$ and $P_k^{(k+1)}$
come from two ordered lists
$\Lambda_k^{(k)}$
and
$\Lambda_k^{(k+1)}$
of the same monomial 
\[
M_k =
\prod_{(\ab,\varepsilon) \in \Lambda_k^{(k)}} x_{\varepsilon \ab}
=
\prod_{(\ab',\varepsilon') \in \Lambda_k^{(k+1)}} x_{\varepsilon' \ab'}.
\]
Hence, by Corollary~\ref{cor:ordering-independence}, 
$P_k^{(k)}-P_k^{(k+1)} \in K[\xb]$
is generated by
quadratic binomials in $I_{\Pc}$.

We now concatenate the following chains:
\[
P_0^{(1)} \rightsquigarrow P_1^{(1)},
P_1^{(1)} \rightsquigarrow P_1^{(2)},
P_1^{(2)} \rightsquigarrow P_2^{(2)},
P_2^{(2)} \rightsquigarrow P_2^{(3)},
\dots,
P_{t-1}^{(t-1)} \rightsquigarrow P_{t-1}^{(t)},
P_{t-1}^{(t)} \rightsquigarrow P_t^{(t)}.
\]
Here, for each chain of the form
$P_{k-1}^{(k)} \rightsquigarrow P_k^{(k)}$,
$P_{k-1}^{(k)}- P_k^{(k)}$
is a monomial multiple of a quadratic binomial in $I_\mathcal{P}$
by Lemma~\ref{lem:two-slot-compatibility}, and for each chain of the form
$P_k^{(k)} \rightsquigarrow P_k^{(k+1)}$,
$P_k^{(k)} - P_k^{(k+1)}$
is generated by quadratic binomials in $I_\mathcal{P}$
by Corollary~\ref{cor:ordering-independence}.
Thus we obtain a sequence of monomials in $K[\xb]$ from
$u=P_0^{(1)}$
to
$v=P_t^{(t)}$
such that every consecutive difference is 
generated by quadratic binomials 
in $I_{\Pc}$.
Hence  $u-v$ belongs to the ideal generated by
quadratic binomials in $I_{\Pc}$.
Therefore, $I_{\Pc}$ is generated by quadratic binomials.
\end{proof}

Theorem~\ref{thm:quadiff} now follows from Corollary~\ref{cor:quad-ascends} and Proposition~\ref{prop:quadratic-descends}.

\subsection{Application to symmetric stable set polytopes}
\label{subsec:stable}

In this subsection, we apply Theorem~\ref{thm:quadiff} to symmetric stable set polytopes.

Let $G$ be a graph on the vertex set $[n]:=\{1,2,\ldots,n\}$ with the edge set $E(G)$.
Given a subset $S \subset [n]$,
let $G[S]$ denote the induced subgraph of $G$ on the vertex set $S$.
A subset $S \subset [n]$ is called a {\em stable set} (or an {\em independent set}) of $G$
if $\{i,j\} \notin E(G)$ for all $i,j \in S$ with $i \neq j$.
Namely, a subset $S \subset [n]$ is stable if and only if $G[S]$ is an empty graph.
In particular, the empty set $\emptyset$ and any singleton $\{i\}$ with $i \in [n]$
are stable.
Denote $S(G)=\{S_1,\ldots,S_m\}$ the set of all stable sets of $G$.
Given a subset $S \subset [n]$, we associate the $(0,1)$-vector $\rho(S)=\sum_{j \in S} \eb_j \in \RR^n$. 
Note that $\rho(\emptyset)=(0,\ldots,0) \in \RR^n$.
The \textit{stable set polytope} of $G$ is
\[
\Pc_G:={\rm conv} \{ \rho(S_1),\ldots,\rho(S_m)\} \subset \RR^n.
\]
The stable set polytope $\Pc_G$ is anti-blocking of dimension $n$.

We call the toric ring $K[G]:=K[\Pc_G]$ the \textit{stable set ring} of $G$ and the toric ideal $I_{G}:=I_{\Pc_G}$ the \textit{stable set ideal} of $G$.
In \cite{OhsugiTsuchiyaquad}, the authors characterized when $I_G$ is generated by quadratic binomials by using Kempe equivalence.
We also recall the notion of Kempe equivalence.
A \emph{\(k\)-coloring} \(f\) of a graph \(G\) is a map from \([n]\) to \([k]\) such that
\(f(i) \neq f(j)\) for all \(\{i,j\} \in E(G)\).
The \emph{chromatic number} of \(G\), denoted by \(\chi(G)\), is the smallest integer \(k\) for which \(G\) admits a \(k\)-coloring.
Given a \(k\)-coloring \(f\) of \(G\), and \(1 \le i < j \le k\),
let \(H\) be a connected component of the induced subgraph of \(G\)
on the vertex set \(f^{-1}(i) \cup f^{-1}(j)\).
Then we obtain a new \(k\)-coloring \(g\) of \(G\) by setting
\[
g(x) =
\begin{cases}
    f(x) & \text{if } x \notin H,\\
    i & \text{if } x \in H \text{ and } f(x)=j,\\
    j & \text{if } x \in H \text{ and } f(x)=i.
\end{cases}
\]
We say that \(g\) is obtained from \(f\) by a \emph{Kempe switching}.
Two \(k\)-colorings \(f\) and \(g\) are called \emph{Kempe equivalent}, denoted by \(f \sim_k g\), if there exists a sequence \(f_0,f_1,\ldots,f_s\) of \(k\)-colorings of \(G\) such that
$f_0=f,f_s=g$,
and \(f_i\) is obtained from \(f_{i-1}\) by a Kempe switching for each \(i\).

Given a graph $G$ on the vertex set $[n]$, and 
$\ab = (a_1,\ldots,a_n) \in \ZZ_{\ge 0}^n$,
let $G_\ab$ be the graph obtained from $G$ by replacing each vertex $i \in [n]$ 
with a complete graph $G^{(i)}$ of $a_i$ vertices (if $a_i =0$, then just delete the vertex $i$),
and joining all vertices $x \in G^{(i)}$ and $y \in G^{(j)}$ such that $\{i,j\}$ is an edge of $G$.
In particular, if $\ab =(1,\ldots,1)$, then $G_\ab = G$.
If $\ab = {\bf 0}$, then $G_\ab$ is the null graph (a graph without vertices). 
In addition, if $\ab$ is a $(0,1)$-vector, then $G_\ab$ is an induced subgraph of $G$.
If $\ab$ is a positive vector, then $G_\ab$ is called a 
\textit{replication graph} of $G$.
In general, $G_\ab$ is a replication graph of an induced subgraph of $G$.

The following is a characterization of quadratic generation of stable set ideals by using Kempe equivalence.

\begin{Proposition}[{\cite[Theorem~1.1]{OhsugiTsuchiyaquad}}]
\label{prop:stable_quad}
Let \(G\) be a graph.
Then \(I_G\) is generated by quadratic binomials if and only if for every \(\ab \in \ZZ_{\ge 0}^n\) and $k \geq \chi(G_{\ab})$, all \(k\)-colorings of \(G_\ab\) are Kempe equivalent.
\end{Proposition}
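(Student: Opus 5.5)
The plan is to translate the Markov-basis description of $I_G$ into the language of colorings of the graphs $G_\ab$. We use the standard fact (Diaconis--Sturmfels, or \cite{HHObook}) that a toric ideal is generated by quadratic binomials if and only if, for every degree $r$ and every $\ab$, the fiber of degree-$r$ monomials $x_{S_1}\cdots x_{S_r}$ with $\sum_j\rho(S_j)=\ab$ is connected. Here two monomials are joined by an edge when they differ by a monomial multiple of a quadratic binomial, i.e.\ when two factors $x_{S}x_{T}$ are replaced by $x_{S'}x_{T'}$ with $\rho(S)+\rho(T)=\rho(S')+\rho(T')$.

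\emph{Step 1: fibers versus colorings.} We show that an $r$-coloring $f$ of $G_\ab$ determines $r$ stable sets of $G$ by setting $S_c=\{i : c\in f(G^{(i)})\}$ for $c\in[r]$. These sets are stable because $f$ is proper across edges $\{i,j\}$, and they satisfy $\sum_c \rho(S_c)=\ab$ because $G^{(i)}$ is a clique, so its $a_i$ vertices receive distinct colors. Conversely, any $(S_1,\dots,S_r)$ in the fiber lifts to an $r$-coloring of $G_\ab$. Thus the degree-$r$ fiber over $\ab$ is nonempty if and only if $r\ge \chi(G_\ab)$; note also that $\ab\in r\Pc_G$ forces $a_i\le r$. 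Two colorings have the same ordered data $(S_1,\dots,S_r)$ exactly when they differ by permuting vertices inside the cliques $G^{(i)}$. They give the same monomial when, in addition, the colors are permuted.

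\emph{Step 2: quadratic moves versus Kempe switches.} We check three things.
\begin{enumerate}[(a)]
\item A Kempe switch on colors $c,d$ changes only $S_c,S_d$ and preserves $\rho(S_c)+\rho(S_d)$, so it is a quadratic move.
\item Conversely, given a quadratic move $S_c,S_d\mapsto S'_c,S'_d$, we recolor the induced subgraph $G_\ab[f^{-1}(c)\cup f^{-1}(d)]$. This subgraph is bipartite, and on each of its connected components any two proper $\{c,d\}$-colorings agree or are swapped. Hence the move is realized by Kempe switches on suitable components, after choosing the lift appropriately within each clique.
\item A global permutation of colors is a product of transpositions, and each transposition is the composite of Kempe switches on all components of the two color classes.
\end{enumerate}

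The main obstacle is the labeling issue in Step 1: two colorings with the same data $(S_c)$ that differ by permuting vertices inside some $G^{(i)}$ must be shown Kempe equivalent. I would first try the following for swapping the colors $c,d$ of $x,y\in G^{(i)}$: $x,y$ lie in the same $\{c,d\}$-component $H$, so switching $H$ swaps them, but it also swaps the colors of the other vertices of $H$. To repair this, I would note that the other vertices of $H$ come in pairs within their own cliques, or else use the freedom in (b) to choose which lift is produced, and argue by induction on the number of misplaced vertices. If this is awkward, the fallback is to work directly with the bijection ``coloring $\leftrightarrow$ labeled stable-set data'' modulo clique automorphisms and absorb these automorphisms into the choice of lift in (b).

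Given Steps 1 and 2, both directions follow. For ``only if'', fix $k\ge\chi(G_\ab)$ and two $k$-colorings of $G_\ab$. Their monomials lie in the same fiber, so they are joined by quadratic moves. Each move lifts to Kempe switches by (b), and the final discrepancy in labeling and color permutation is removed by (c) and the obstacle step. For ``if'', take two monomials in a fiber of degree $r$. We have $r\ge\chi(G_\ab)$ by Step 1, so we lift both to $r$-colorings and connect them by Kempe switches. Projecting each switch via (a) gives a chain of quadratic moves between the two monomials.
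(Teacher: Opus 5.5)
The paper does not prove this proposition; it quotes it from \cite{OhsugiTsuchiyaquad}, so your argument has to stand on its own. Your dictionary is the right one: $r$-colorings of $G_\ab$ correspond to ordered $r$-tuples of stable sets with $\sum_c\rho(S_c)=\ab$, and Kempe switches correspond to quadratic moves. Steps~1 and 2(a)--(c) and the ``if'' direction are correct as written.

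\textbf{The gap.} It is the step you flag yourself. You need $f\sim_k f\circ\sigma$ whenever $\sigma$ only permutes vertices inside the cliques $G^{(i)}$. Without this, the ``only if'' direction ends at a coloring $f_t$ with the same monomial as $g$, not at $g$. None of your repairs is an argument. The induction on misplaced vertices is never set up. The claim that the other vertices of the Kempe chain $H$ ``come in pairs'' is unjustified, and it rests on a misconception: $H$ has no other vertices. The fallback of absorbing clique automorphisms into the choice of lift in (b) is only a sketch.

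\textbf{The missing observation.} Two vertices $x,y\in G^{(i)}$ are true twins: their closed neighbourhoods in $G_\ab$ coincide. Suppose $f(x)=c$ and $f(y)=d$, and let $z\neq x,y$ be a vertex of $f^{-1}(\{c,d\})$ adjacent to $x$. Then $z$ is also adjacent to $y$, so $f(z)\notin\{c,d\}$, a contradiction. Hence the $\{c,d\}$-component containing $x$ is exactly $\{x,y\}$. One Kempe switch therefore exchanges the colors of $x$ and $y$ and changes nothing else. The result $f\circ(x\,y)$ is again a proper coloring by the twin property, so every permutation inside the cliques is a product of such switches.

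Your own argument in (b) already forces this. The maps $f$ and $f\circ(x\,y)$ are proper $\{c,d\}$-colorings of the bipartite subgraph that differ exactly on $\{x,y\}$, so $\{x,y\}$ must be a union of its components. You just never applied it to a trivial move.

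With this lemma the ``only if'' direction closes:
\begin{enumerate}
\item[(1)] connect $m_f$ to $m_g$ by quadratic moves;
\item[(2)] lift these moves via (b) to get $f\sim_k f_t$ with $m_{f_t}=m_g$;
\item[(3)] relabel colors via (c);
\item[(4)] finish with twin swaps.
\end{enumerate}
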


For a graph $G$ on $[n]$, we call the unconditional lattice polytope $\Pc_G^{\pm}$ the \textit{symmetric stable set polytope} of $G$. Additionally, we call the toric ring $K[G^{\pm}]:=K[\Pc^{\pm}_G]$ the \textit{symmetric stable set ring} of $G$ and the toric ideal $I_{G^{\pm}}:=I_{\Pc^{\pm}_G}$ the \textit{symmetric stable set ideal} of $G$.

Combining Proposition~\ref{prop:stable_quad} with Theorem~\ref{thm:quadiff}, we immediately obtain Theorem~\ref{thm:app}.

\begin{Theorem}\label{thm:app}
Let $G$ be a graph on $[n]$. Then the following conditions are equivalent{\rm :}
\begin{enumerate}
    \item[{\rm (i)}] $I_{G^{\pm}}$ is generated by quadratic binomials{\rm ;}
    \item[{\rm (ii)}] $I_G$ is generated by quadratic binomials{\rm ;}
    \item[{\rm (iii)}] for every $\ab \in \ZZ_{\ge 0}^n$ and $k \geq \chi(G_{\ab})$,
    all $k$-colorings of $G_\ab$ are Kempe equivalent.
\end{enumerate}
\end{Theorem}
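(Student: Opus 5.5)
The plan is to chain two results already available in the paper. The stable set polytope \(\Pc_G\) is an anti-blocking lattice polytope of dimension \(n\) with \(\Pc_G \cap \ZZ^n = \{\rho(S_1),\ldots,\rho(S_m)\}\). Moreover, \(\Pc_G^{\pm}\) is exactly the unconditional polytope associated with it. So the hypotheses of Theorem~\ref{thm:quadiff} are met with \(\Pc=\Pc_G\), and they are met without any extra verification.

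For the equivalence (i)\(\Leftrightarrow\)(ii), apply Theorem~\ref{thm:quadiff} to \(\Pc=\Pc_G\). This gives that \(I_{\Pc_G}=I_G\) is generated by quadratic binomials if and only if \(I_{\Pc_G^{\pm}}=I_{G^{\pm}}\) is. In more detail, the direction (ii)\(\Rightarrow\)(i) is Corollary~\ref{cor:quad-ascends}, and the direction (i)\(\Rightarrow\)(ii) is Proposition~\ref{prop:quadratic-descends}.

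For the equivalence (ii)\(\Leftrightarrow\)(iii), quote Proposition~\ref{prop:stable_quad} verbatim, since its statement is precisely this equivalence for the graph \(G\).

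Combining these two equivalences yields (i)\(\Leftrightarrow\)(ii)\(\Leftrightarrow\)(iii). There is no real obstacle. The only point needing a moment's care is to confirm that \(\Pc_G\) is full-dimensional and anti-blocking, as the definitions require. Full-dimensionality holds because \(\mathbf 0\) and every \(\eb_i\) lie in \(\Pc_G\), since \(\emptyset\) and the singletons are stable. The anti-blocking property holds because subsets of stable sets are stable, so \(\Pc_G\) is down-closed on its vertices and hence as a convex hull. Both facts were already noted in the text.
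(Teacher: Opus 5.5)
Your proposal is correct and matches the paper's argument: the paper obtains Theorem~\ref{thm:app} by applying Theorem~\ref{thm:quadiff} to the anti-blocking polytope $\Pc_G$ for (i)$\Leftrightarrow$(ii), and by quoting Proposition~\ref{prop:stable_quad} for (ii)$\Leftrightarrow$(iii). Your extra check that $\Pc_G$ is full-dimensional and anti-blocking is a harmless supplement to what the paper simply asserts.
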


\bibliographystyle{plain}
\bibliography{bibliography}
\end{document}